\documentclass[11pt, letterpaper, oneside,reqno]{amsart}

\headheight=8pt   \topmargin=0pt \textheight=624pt
\textwidth=432pt \oddsidemargin=18pt \evensidemargin=18pt

\usepackage{latexsym, amsmath, amssymb, amsfonts, amscd,bm}
\usepackage{amsthm}
\usepackage{t1enc}
\usepackage[mathscr]{eucal}
\usepackage{indentfirst}
\usepackage{graphicx, pb-diagram}
\usepackage{fancyhdr}
\usepackage{fancybox}
\usepackage{enumerate}
\usepackage{color}
\usepackage[all]{xy}
\usepackage[hidelinks]{hyperref}
\usepackage{mdframed}
\usepackage{tikz}
\usepackage{tikz-cd}
\usetikzlibrary{matrix,arrows}

\usepackage{url}

\theoremstyle{plain}
\newtheorem{thm}{Theorem}[section]
\newtheorem*{thm*}{Theorem}

\newtheorem{prop}[thm]{Proposition}

\newtheorem{lemma}[thm]{Lemma}
\newtheorem{cor}[thm]{Corollary}

\theoremstyle{definition}
\newtheorem{defi}[thm]{Definition}

\newtheorem{ep}[thm]{Example}

\theoremstyle{remark} 
\newtheorem{remark}[thm]{Remark}

\newcommand{\ZZ}{\ensuremath{\mathbb Z}}
\newcommand{\CC}{\ensuremath{\mathbb C}}
\newcommand{\RR}{\ensuremath{\mathbb R}}
\newcommand{\g}{\ensuremath{\mathfrak{g}}}

\newcommand{\h}{\ensuremath{\mathfrak{h}}}

\DeclareMathOperator{\gr}{graph}


\newcommand{\cL}{\mathcal{L}}
\newcommand{\cD}{\mathcal{D}}

 
\newcommand{\ra}{\rangle}
\newcommand{\la}{\langle}
\newcommand{\oo}{[\![}
\newcommand{\cc}{]\!]}

 \newcommand{\pp}{\varepsilon}

\title[Stability]{Stability of fixed points of Dirac structures
}

\author{Karandeep J. Singh}
\address{Department of Mathematics, KU Leuven, Celestijnenlaan 200B, 3001 Leuven, Belgium}
\email{\href{mailto:karandeep.singh@kuleuven.be}{\underline{\smash{karandeep.singh@kuleuven.be}}}}

\author{Marco Zambon}
\address{Department of Mathematics, KU Leuven, Celestijnenlaan 200B, 3001 Leuven, Belgium}
\email{\href{mailto:marco.zambon@kuleuven.be}{\underline{\smash{marco.zambon@kuleuven.be}}}}
  
\keywords{Stability, singular points,  $L_\infty$-algebra, Dirac geometry.}


\thanks{2020 Mathematics Subject Classification:  primary 17B70, 53D17, secondary 8H15.}

\begin{document}

\begin{abstract}
Given an $L_{\infty}$-algebra $V$ and an   $L_{\infty}$-subalgebra $W$, we give sufficient conditions for all small Maurer-Cartan elements of $V$  to be equivalent to  Maurer-Cartan elements lying in $W$.
As an application, we obtain a stability criterion for fixed points of a Dirac structure in an arbitrary Courant algebroid of split signature (for instance a  twisted Poisson structure), i.e. points where
the corresponding leaf is zero-dimensional. The criterion guarantees that any nearby Dirac structure also has a fixed point.
 \end{abstract}

\maketitle

\setcounter{tocdepth}{1} 
\tableofcontents

\section*{Introduction}

Stability questions appear naturally in mathematics. For instance, given a vector field $X$ vanishing at a point $p$, one can ask about the stability of $p$: does every vector field sufficently close to $X$ have a zero nearby $p$? 
Given a Lie algebra structure on a fixed vector space and a Lie subalgebra $\h$, one can ask about the stability of $\h$: does every sufficently close Lie algebra structure admit a Lie subalgebra nearby $\h$?

The main contribution of this paper is two-fold. First we state an algebraic theorem about  $L_{\infty}[1]$-algebras.
This   theorem can be applied to a variety of stability questions. In the second part of the paper we apply it to a specific geometric problem, obtaining a stability criterion for fixed points of Dirac structures. This includes twisted Poisson structures as a special case, and extends some  results obtained in \cite{CrFe}\cite{dufour2005stability}\cite{KarandeepStability}.
 
\bigskip
Recall that \emph{$L_{\infty}[1]$-algebras} are a notion equivalent to the $L_{\infty}$-algebras introduced by Lada and Stasheff in the 1990's \cite{LadaStasheff}, in order to provide a ``up to homotopy'' version of Lie algebras. They contain special elements -- called Maurer-Cartan elements 
--   which come equipped with an equivalence relation. Deformation problems are typically governed by such algebraic structures, in the sense that (equivalence classes of) deformations are parametrized by  (equivalence classes of) Maurer-Cartan  elements of the $L_{\infty}[1]$-algebra.

We paraphrase our main algebraic results as follows, omitting technical assumptions, and refer to theorem \ref{thm:mainthm} for the full statement:
\begin{thm*}
 Let $V$ be an $L_{\infty}[1]$-algebra, {whose underlying cochain complex we denote by by $(V,d)$}. Let $W$ be an $L_{\infty}[1]$-subalgebra of {finite codimension}, and fix a Maurer-Cartan element $Q$ of $W$.  
 Denote by $d^Q$ is the differential on $V$ obtained twisting $d$ by $Q$, 
  and view $d^Q$ as a differential on the quotient $V/W$.
If $$H^0(V/W, d^Q)=0,$$ 
then, {under some technical conditions}, 
any Maurer-Cartan element of $V$ sufficently close to $Q$ is  equivalent to a Maurer-Cartan element lying in $W$.
\end{thm*}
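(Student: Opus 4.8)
The plan is to reduce to the untwisted situation by means of $Q$, and then to gauge away, through a convergent Newton iteration, the component of a Maurer--Cartan element transverse to $W$. First I would twist the structure by $Q$: replacing the brackets $\ell_k$ by their $Q$-twisted versions, the element $0$ becomes Maurer--Cartan, its linear part is exactly $d^Q$, and since $Q\in W$ and $W$ is a subalgebra, $W$ remains an $L_\infty[1]$-subalgebra with $d^Q(W)\subseteq W$. The shift $a\mapsto a-Q$ identifies Maurer--Cartan elements of $V$ near $Q$ with small Maurer--Cartan elements of the twisted algebra and preserves equivalence, so I may assume $Q=0$, write $d:=d^Q$, and let $\pi\colon V\to V/W$ be the projection, which satisfies $\pi\circ d=\bar d\circ\pi$ for the induced differential $\bar d$ on $V/W$. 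The goal becomes: every small Maurer--Cartan element $a$ (living in degree $0$) is equivalent to one with $\pi(a)=0$, i.e.\ lying in $W$.

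The cohomological input enters through two complementary observations. On one hand, applying $\pi$ to the Maurer--Cartan equation $da+\sum_{k\ge 2}\tfrac1{k!}\ell_k(a,\dots,a)=0$ and using $\pi d=\bar d\pi$ gives $\bar d\,\pi(a)=-\pi\bigl(\sum_{k\ge2}\tfrac1{k!}\ell_k(a,\dots,a)\bigr)=O(\|a\|^2)$; that is, the transverse component $\pi(a)\in(V/W)^0$ is $\bar d$-closed up to quadratic order. On the other hand, the hypothesis $H^0(V/W,d^Q)=0$ says that every $\bar d$-closed element of $(V/W)^0$ is $\bar d$-exact, so (using that $V/W$ is finite-dimensional, hence all relevant linear maps are bounded with bounded inverses on complements) I can fix a bounded linear homotopy $h\colon(V/W)^0\to(V/W)^{-1}$ with $\bar d\,h=\mathrm{id}$ on the closed elements, together with a bounded linear lift $(V/W)^{-1}\to V^{-1}$. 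Lifting $-h(\pi(a))$ to a gauge parameter $\xi\in V^{-1}$ and applying the corresponding gauge transformation yields a Maurer--Cartan element $a'=\exp(\xi)\cdot a$ with $\pi(a')=\pi(a)-\bar d\,h(\pi(a))+O(\|a\|^2)=O(\|a\|^2)$. It is worth noting that a naive implicit function theorem in $\xi$ alone cannot work here, since the linearization $\eta\mapsto\bar d\,\pi(\eta)$ surjects only onto the closed elements of $(V/W)^0$, not onto all of $(V/W)^0$; it is precisely the Maurer--Cartan constraint, forcing $\pi(a)$ to be closed to leading order, that makes this single step solvable.

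Iterating this Newton step produces a sequence of gauge-equivalent Maurer--Cartan elements whose transverse components $\pi(a)$ decrease quadratically, while the elements remain Maurer--Cartan and small. Under the technical hypotheses — completeness of the filtration, respectively suitable norm estimates on the brackets, combined with the finite codimension of $W$, which supplies uniform bounds on $\pi$, on $h$, and on the lift at every step — the infinite composition of the gauge transformations converges to an equivalence carrying $a$ to a limit $a_\infty$ with $\pi(a_\infty)=0$. Since $a_\infty\in W$ and $W$ is a subalgebra, $a_\infty$ is a Maurer--Cartan element of $W$ equivalent to $a$, which is the assertion.

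I expect the main obstacle to be exactly this convergence: one must control the gauge flow and the nonlinear Maurer--Cartan remainder uniformly along the iteration, so that the quadratic gain is not destroyed by growth of the constants produced at each step. This is where the finite-codimension assumption and the quantitative form of the technical conditions do the essential analytic work; the cohomological hypothesis $H^0(V/W,d^Q)=0$ is used only to guarantee solvability of the single linear step, through the existence of the homotopy $h$.
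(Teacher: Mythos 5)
Your overall mechanism is correctly identified --- the vanishing of $H^0(V/W,\overline{\mu_1^Q})$ is what makes the transverse component of a Maurer--Cartan element removable by gauge, and your remark that a naive implicit function theorem in the gauge parameter alone fails is exactly right --- but your proof has a genuine gap: the convergence of the Newton iteration is not available under the theorem's actual hypotheses. Theorem \ref{thm:mainthm} assumes only locally convex topologies on the $V^i$, continuity of the multibrackets $S^k(V^0)\to V^1$, and joint continuity of the gauge action together with smoothness of the mod-$W^0$ class; there is no completeness, no filtration, and no norm estimates on the brackets. Your appeal to ``completeness of the filtration, respectively suitable norm estimates'' silently strengthens the hypotheses, and in the intended geometric application $V^0=\Gamma(\wedge^2 A^\ast)$ carries the $C^1$-topology on smooth sections, where an infinite composition of gauge flows need not converge within the space (this is Nash--Moser territory), nor is such an infinite composition obviously an equivalence in the sense of definition \ref{def:gaugetransform}. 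Moreover, even in a Banach setting your contraction estimate is wrong as stated: the one-step gain $\pi(a')=O(\|a\|^2)$ involves the \emph{full} norm of $a$, which does not shrink along the iteration (the Maurer--Cartan element stays of size roughly $\|a_0\|$), so iterating your estimate stalls at $O(\|a\|^2)$ instead of converging to zero. To repair this you must use that $W$ is a subalgebra: writing $a=w+s(\pi(a))$ with $w\in W$ and $s$ a splitting, all purely-$W$ bracket terms are killed by $\pi$, giving the refined estimate $\overline{\mu_1}\,\pi(a)=O(\|a\|\cdot\|\pi(a)\|)$ and hence a geometric (not quadratic) contraction with ratio $O(\|a\|)$, together with uniform control of $\|a_n\|$ --- none of which appears in your write-up.

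The paper avoids all of this analysis with a one-shot, finite-dimensional transversality argument, which is why its weak hypotheses suffice. Since $W$ has finite codimension, it works entirely in the quotients: it considers $\mathrm{ev}_{Q'}(v)=(Q')^{\sigma_{-1}(v)}+W^0$ on $V^{-1}/W^{-1}$ and an auxiliary ``curvature'' map $R_{v,Q'}:V^0/W^0\to V^1/W^1$, built from the Maurer--Cartan expression, whose derivative at the relevant points is $\overline{\mu_1^Q}$. The cohomological hypothesis says precisely that the image of $\overline{\mu_1^Q}:V^{-1}/W^{-1}\to V^0/W^0$ contains $\ker(\overline{\mu_1^Q}:V^0/W^0\to V^1/W^1)$, so $\mathrm{ev}_Q$ is transverse at $0$ to a complement $K$ of that kernel; stability of transverse intersections under $C^1$-small perturbation yields, for every Maurer--Cartan $Q'$ near $Q$, a parameter $v$ with $\mathrm{ev}_{Q'}(v)\in K$ near the origin. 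Then the \emph{exact} identity $R_{v,Q'}(\mathrm{ev}_{Q'}(v))=0$, valid for Maurer--Cartan $Q'$, combined with $R_{v,Q'}(0)=0$ and injectivity of $R_{v,Q'}$ on a uniform neighborhood of $0$ in $K$ (an immersion estimate at $Q'=Q$, $v=0$, propagated by continuity), forces $\mathrm{ev}_{Q'}(v)=0$ exactly --- a single gauge transformation by $\sigma_{-1}(v)$ lands $Q'$ in $W$, with no limit process and no completeness. In short: you exploit the Maurer--Cartan constraint perturbatively, step by step; the paper exploits it nonperturbatively through the injectivity of $R$ on $K$, which is what lets the theorem hold in merely locally convex spaces.
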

This result  extends a previous one on differential graded Lie algebras by the first author \cite[theorem 3.20]{KarandeepStability} 
 {(see also the works of Dufour-Wade \cite{dufour2005stability}
 and Crainic-Fernandes \cite{RigFlexPois}\cite{CrFe}).}

\bigskip
As an application of the above algebraic theorem, we consider \emph{Dirac structures} \cite{Cou}, geometric structures which include Poisson bivector fields and closed 2-forms, and which can be used to characterize Hitchin's generalized complex structures \cite{Hi}. 
Dirac structures are defined as lagrangian and involutive subbundles of \emph{Courant algebroids} \cite{LWX}. Important examples of the latter are $TM\oplus T^*M$, endowed with a bracket that depends on a choice of closed 3-form $H$ on the manifold $M$. In that case, one speaks of $H$-twisted Dirac structures  \cite{SW} (this includes $H$-twisted Poisson   structures). They first appeared in the context of $\sigma$-models in physics, in the work of Klim{\v c}ik-Strobl
\cite{KLIMCIK2002341} and Park \cite{ParkOpen}, and in that context $H$ is  called the Wess-Zumino-Witten $3$-form.

A byproduct of this note is a geometric characterization of the equivalences of Dirac structures induced by the $L_{\infty}[1]$-algebra governing deformations of Dirac structures: they are given by applying inner automorphisms of the ambient Courant algebroid, see proposition \ref{prop:CAauto}.
 
Applying the above algebraic theorem, and upon making explicit the assumptions and conclusions, one obtains theorem \ref{thm:stableDirac2}. This is a statement  on the stability of fixed points {(i.e. zero-dimensional leaves)} of Dirac structures .
We state a simplified version  as follows: 
\begin{thm*}
Let $E\to M$ be a Courant algebroid whose pairing has split signature, denote by $\rho\colon E\to TM$ its anchor map.
Let $A\subset E$ be a Dirac structure which has a fixed point at $p\in M$, i.e.  $\rho(A_p)=0$. Denote by $\g$ the Lie algebra $A_p$, and consider the Lie ideal $\mathfrak{h}:=(\ker(\rho|_{E_p}))^{\perp}$. 
Assume that
$$
H^2\left(\frac{\wedge^{\bullet} \g^*}{\wedge^{\bullet}\h^{\circ}},\overline{d_{\g}}\right) = 0.
$$
Then any Dirac structure   sufficiently close to $A$ admits
a fixed point nearby $p$, and lying in the leaf of $E$ through $p$.
 \end{thm*}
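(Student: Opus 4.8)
The plan is to deduce the statement as an application of theorem~\ref{thm:mainthm} to the $L_{\infty}[1]$-algebra $V$ governing deformations of the Dirac structure $A$. Since the pairing on $E$ has split signature, $A$ admits a lagrangian complement $B\subset E$, giving an identification $E\cong A\oplus A^{*}$ under which every Dirac structure $C^{\infty}$-close to $A$ is the graph of a small element of $\Omega^{2}(A)=\Gamma(\wedge^{2}A^{*})$. Accordingly the controlling $L_{\infty}[1]$-algebra has underlying cochain complex the Lie algebroid de Rham complex $(\Omega^{\bullet}(A),d_{A})$, graded so that $\Omega^{k}(A)$ sits in degree $k-2$; then Maurer--Cartan elements lie in degree $0$, i.e.\ in $\Omega^{2}(A)$, and correspond exactly to the nearby Dirac structures. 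The base structure $A$ itself determines a Maurer--Cartan element $Q$, whose $Q$-twisted differential $d^{Q}$ is precisely $d_{A}$.

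Next I would identify the $L_{\infty}[1]$-subalgebra $W$ encoding the condition ``having a fixed point near $p$, inside the leaf of $E$ through $p$''. The natural candidate is
\[
  W=\{\omega\in\Omega^{\bullet}(A):\ \omega|_{p}\in\wedge^{\bullet}\h^{\circ}\},
\]
the forms whose value at $p$, an element of $\wedge^{\bullet}A_{p}^{*}=\wedge^{\bullet}\g^{*}$, lies in the annihilator subspace $\wedge^{\bullet}\h^{\circ}$. Since $\h$ is a Lie ideal of $\g$, the space $\wedge^{\bullet}\h^{\circ}=\wedge^{\bullet}(\g/\h)^{*}$ is closed under the Chevalley--Eilenberg differential of $\g$, which is exactly the fiberwise restriction of $d_{A}$ at the fixed point $p$; I would also verify that the higher brackets of $V$ preserve $W$, so that $W$ is genuinely an $L_{\infty}[1]$-subalgebra, and that $Q\in W$, reflecting that $A$ already has a fixed point at $p$. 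Evaluation of forms at $p$ then identifies the quotient as a cochain complex,
\[
  \big(V/W,\,d^{Q}\big)\ \cong\ \Big(\tfrac{\wedge^{\bullet}\g^{*}}{\wedge^{\bullet}\h^{\circ}},\,\overline{d_{\g}}\Big),
\]
which is finite-dimensional, so $W$ has finite codimension. Under the degree convention above, degree $0$ of $V/W$ corresponds to form-degree $2$, and hence the hypothesis $H^{2}(\wedge^{\bullet}\g^{*}/\wedge^{\bullet}\h^{\circ},\overline{d_{\g}})=0$ becomes exactly $H^{0}(V/W,d^{Q})=0$.

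With this dictionary in place I would check the remaining technical hypotheses of theorem~\ref{thm:mainthm} (the relevant completeness of the filtration together with the finite-codimensionality of $W$ already established) and apply it: every Maurer--Cartan element of $V$ sufficiently close to $Q$ is equivalent to a Maurer--Cartan element lying in $W$. Geometrically, every Dirac structure sufficiently close to $A$ is equivalent to one whose graph datum lies in $W$. Here the meaning of $\h=(\ker(\rho|_{E_{p}}))^{\perp}$ enters: one has $\dim\h=\dim\rho(E_{p})$, so $\h$ records precisely the tangent directions of the $E$-leaf through $p$, and membership in $W$ is what guarantees a fixed point can be kept within that leaf.

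Finally I would convert the algebraic equivalence into geometry via proposition~\ref{prop:CAauto}: the equivalence is realized by an inner automorphism $\Phi$ of the Courant algebroid $E$ covering a diffeomorphism $\phi$ of $M$ close to the identity. Since automorphisms intertwine the anchors and have leaf-tangent infinitesimal generators, $\Phi$ carries fixed points to fixed points within the same leaf. Transporting the fixed point of the Dirac structure lying in $W$ through $\Phi$ therefore produces a fixed point of the given nearby Dirac structure, located near $p$ and in the leaf of $E$ through $p$. The step I expect to be the \emph{main obstacle} is the second one: pinning down $W$ exactly, proving it is closed under all the higher brackets, establishing the isomorphism $V/W\cong\wedge^{\bullet}\g^{*}/\wedge^{\bullet}\h^{\circ}$, and justifying the geometric reading of ``lying in $W$'' as ``having a fixed point in the leaf''. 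Matching the Lie algebroid differential $d_{A}$ at $p$ with the Chevalley--Eilenberg differential $d_{\g}$ of the isotropy, and verifying that the ideal $\h$ captures precisely the leafwise freedom, is the delicate part on which the whole reduction rests.
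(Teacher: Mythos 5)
Your proposal is correct and follows essentially the same route as the paper: you apply theorem \ref{thm:mainthm} to $V=\Gamma(\wedge^\bullet A^*)[2]$ with $Q=0$ and the same subalgebra $W$ (the verifications you flag as the main obstacle are precisely the paper's lemmas \ref{lem:subalg} and \ref{lem:hypo}, which show $W$ is closed under the brackets, identify $\ker((\rho_{A^*})_p)$ with $\h^\circ$, and identify the quotient with $\bigl(\wedge^\bullet\g^*/\wedge^\bullet\h^\circ,\overline{d_\g}\bigr)$), and you then convert the gauge equivalence into geometry via proposition \ref{prop:CAauto} and the leafwise flow of $-\rho(\xi)$, exactly as in lemmas \ref{cor:fixedpt} and \ref{lem:diffeo}. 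The only minor imprecision is that the technical hypotheses are checked not through any completeness of a filtration but through the choices of \S\ref{sec:stabDirac} (compactly supported splittings, the $C^\infty$/$C^1$/$C^0$ topologies, and lemma \ref{lem:suffcondcont}), and that Maurer--Cartan elements of $W$ have their fixed point exactly at $p$, the freedom along the leaf entering only through the gauge transformation.
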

{This theorem clarifies and improves \cite[theorem 5.50]{KarandeepStability}, since it holds in wider generality and without making any auxiliary choice.} 
 
We conclude this note presenting some examples of the above theorem in \S \ref{sec:ex}.
For instance, on a  Lie group $G$ with a bi-invariant metric we  consider  the Cartan-Dirac structure, which is twisted by the Cartan-Dirac $3$-form $H$. The identity $e\in G$ is a fixed point of the Cartan-Dirac structure. If the second Lie algebra cohomology vanishes, then any sufficently close  $H$-twisted Dirac structure also has a fixed point nearby $e$.

 \bigskip
  \paragraph{\bf Acknowledgements.} 

We acknowledge partial support by
the FWO and FNRS under EOS projects G0H4518N and G0I2222N, and by FWO project G0B3523N (Belgium).
M.Z.~acknowledges partial support by  Methusalem grant METH/21/03 - long term structural funding of the Flemish Government.

\section{Background on $L_\infty[1]$-algebras}
In this section we recall basic notions about $L_\infty[1]$-algebras. {The latter are central objects in deformation theory, and are completely equivalent to $L_{\infty}$-algebras \cite{LadaStasheff}, which have ordinary Lie algebras and differential graded Lie algebras as special cases.} All vector spaces are assumed to be over $\mathbb K =\RR$ or $\mathbb K = \CC$.

\begin{defi}\label{def:linfty}
An \emph{$L_\infty[1]$-algebra} is a pair $(V,\{\mu_k\}_{k\geq 1})$, where
\begin{itemize}
    \item [i)] $V = \bigoplus_{i \in \ZZ} V^i$ is a $\ZZ$-graded vector space,

    \item [ii)] for every $k\geq 1$,
                \begin{equation*}
                    \mu_k\colon S^k(V) \to V
                \end{equation*}
                is a {multilinear} degree 1 map called a multibracket,
\end{itemize}
satisfying for $n\geq 0$, $x_0,\dots, x_n \in V$, 
\begin{equation}\label{eq:higherjacobi}
    \sum_{i = 0}^{n} \sum_{\sigma \in Sh(i+1,n-i-1)} \epsilon(\sigma)\mu_{n-i+1}(\mu_{i+1}(x_{\sigma(0)},\dots, x_{\sigma(i)}),x_{\sigma(i+1)},\dots,x_{\sigma(n)}) = 0.
\end{equation}
Here {$S^k(V)$ denotes the $k$-th graded symmetric power of $V$, and }  $\epsilon(\sigma)$ is the \emph{Koszul} sign, determined by
$$
x_1\dots x_n = \epsilon(\sigma)x_{\sigma(1)}\dots x_{\sigma(n)}
$$
in the graded symmetric algebra $S(V)$.
\end{defi}
 
\begin{remark}
The equations \eqref{eq:higherjacobi} are higher analogues of the Jacobi identity for Lie algebras.
\begin{itemize}
    \item [i)] For $n = 0$, it follows that $$\mu_1^2 = 0,$$ turning $(V,\mu_1)$ into a cochain complex.
    \item [ii)] For $n = 1$, it follows that $\mu_1$ is a {graded} derivation of $\mu_2$.
    \item [iii)] For $n = 2$, it follows that $\mu_3$ is a contracting homotopy of the Jacobiator of $\mu_2$, with respect to the differential $\mu_1$.
\end{itemize}
\end{remark}
Now let $(V,\{\mu_k\}_{1\leq k\leq n})$ 
be an $L_\infty[1]$-algebra with $\mu_k \equiv 0$ for $k>n$ (i.e., only finitely many multibrackets are non-zero).

\begin{defi}\label{def:MC}
A degree $0$ element  $Q \in V^0$ is a \emph{Maurer-Cartan element} if
\begin{equation}\label{eq:MCeq}
    \sum_{i = 1}^n \frac{1}{i!} \mu_i(Q,\dots,Q) = 0.
\end{equation}
\end{defi}

{A motivation for this definition is the following.}
For any  degree $0$ element $Q$, we can define new structure maps
\begin{equation}\label{eq:muQ}
\mu_k^Q := \sum_{i = 0}^\infty \frac{1}{i!} \mu_{k+i}(\underbrace{Q,\dots,Q}_{i \text{ times}},-,\dots,-),    
\end{equation}
where we note that the sum is finite. A natural question to ask is when these maps define a new $L_\infty[1]$-algebra structure on $V$. {It turns out that
if $Q$ is a Maurer-Cartan element, then  $(V,\{\mu_k^Q\}_{1\leq k \leq n})$ is an $L_\infty[1]$-algebra (see e.g. the text following  \cite[lemma 2.2.1]{FukDef}, or \cite[\S2]{DolgErr}).}

Given a degree 0 element $Q \in V^0$ and a $X\in V^{-1}$, we can construct a new degree 0 element, denoted by $Q^X$, as follows. For the following definition, we need $V^0$ to carry a topology.

\begin{defi}\label{def:gaugetransform}
Let $(V,\{\mu_k\}_{1\leq k \leq n})$ be an $L_\infty[1]$-algebra such that for every $i \in \mathbb Z$, $V^i$ carries a locally convex topology\footnote{See \cite{RudinFA} for some background on locally convex vector spaces, and see \cite{ChazarainPDEbook} for some background on function spaces.}. For $Q\in V^0$ and $X\in V^{-1}$, assume that the initial value problem
\begin{equation}\label{eq:gaugeeqgen}
\frac{d}{dt}Q_t = \mu_1^{Q_t}(X),\quad\quad Q_0 = Q
\end{equation}
has a unique solution for all $t\in [0,1]$ (notice that the right hand side was defined in equation \eqref{eq:muQ}). Then we define
$$
Q^X := Q_1 \in V^0,
$$
the value of the solution at $t=1$.
\end{defi}
\begin{remark}
When solving equation \eqref{eq:gaugeeqgen} in terms of formal paths, or when the $L_\infty[1]$-algebra is nilpotent, it can be shown that $Q^X$ is a Maurer-Cartan element if and only if $Q$ is (see for instance \cite[Corollary 1]{DotsenkoPoncin}). Since we are dealing with differentiable paths, we will need to assume {that the solution of \eqref{eq:gaugeeqgen} takes value in the space of Maurer-Cartan elements.}
\end{remark}

{Finally, we will need subspaces of $L_\infty[1]$-algebras which have an induced $L_\infty[1]$-algebra structure.}

\begin{defi}
Let $(V,\{\mu_k\}_{k\geq 1})$ be an $L_\infty[1]$-algebra, and let $W\subseteq V$ be a graded linear subspace. Then $W$ is said to be a   \emph{$L_\infty[1]$-subalgebra} if $$\mu_k(S^k(W)) \subseteq W$$ for all $k\geq 1$.
\end{defi}

\section{Main theorem for $L_\infty[1]$-algebras}

{In this subsection we present a general statement about $L_{\infty}$-algebras and their Maurer-Cartan elements. It generalizes \cite[\S 3.3]{KarandeepStability} from differential graded Lie algebras to  $L_{\infty}$-algebras.}

Assume that we have the following data: 
\medskip

\begin{mdframed}
\smallskip
\begin{itemize}
    \item [i)] An $L_\infty[1]$-algebra $(V,\{\mu_k\}_{1\leq k\leq n})$ {with finitely many non-trivial multibrackets}, such that  for each $i=-1,0,1$, $V^i$ carries a locally convex topology, 
    \item[ii)] 
    a $L_\infty[1]$-subalgebra $W$ of $V$ such that, for $i=-1,0,1$, the subspace $W^i$ is of finite codimension and closed in $V^i$,
    \item[iii)] linear splittings $\sigma_i: V^i/W^i \to V^i$ for $i = -1,0$,
    \item[iv)] a Maurer-Cartan element $Q \in W^0$,
\end{itemize}
\smallskip
such that
\smallskip
\begin{itemize}
    \item[a)] the multibrackets $\mu_k:S^k(V^0) \to V^1$ are continuous, when viewed as symmetric $\mathbb{K}$-multilinear maps,
    \item[b)] there is a convex open neighborhood $U$ of $0\in V^{-1}/W^{-1}$ such that for every $X\in U$  the following holds: the element $Q^{\sigma_{-1}(X)}$ as in definition \ref{def:gaugetransform} is  defined, the assignment
    $$
    U\times V^0 \to V^0, \quad  (X,Q') \mapsto (Q')^{\sigma_{-1}(X)}
    $$  
    is jointly continuous, and the  mod $W^0$ class {of $(Q')^{\sigma_{-1}(X)}$} depends smoothly
    on $X\in U$ for each fixed $Q'$,
    \item[c)] for $X\in U$, an element $Q' \in V^0$ is Maurer-Cartan if and only if $(Q')^{\sigma_{-1}(X)}$ is Maurer-Cartan.
\end{itemize}
\smallskip
\end{mdframed}
\medskip 
Recall that $\mu_1^Q$ was defined in eq. \eqref{eq:muQ}; we denote by $\overline{\mu_1^Q}$ the induced differential on $V/W$.
\begin{thm}\label{thm:mainthm}
Assume that we are in the setting described above. Assume that
$$
H^0(V/W,\overline{\mu_1^Q}) = 0.
$$
Then there exists an open neighborhood $\mathcal U\subset V^0$ of $Q$  such that for any Maurer-Cartan element $Q'\in \mathcal U$, there exists a family $I\subset U$, {smoothly} parametrized by an open neighborhood of 
 $$0\in \ker({\overline{\mu^Q_1}}: V^{-1}/W^{-1} \to V^{0}/W^{0}) , $$ with the property that $x \in I \implies (Q')^{\sigma_{-1}(x)}\in W^0$.
\end{thm}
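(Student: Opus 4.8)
The plan is to reformulate the conclusion as a zero-finding problem for a map between the \emph{finite-dimensional} quotients $V^i/W^i$ and to solve it by an implicit function theorem argument. Concretely, I would consider
\[
\Phi\colon U\times\cU\longrightarrow V^0/W^0,\qquad \Phi(X,Q')=\overline{(Q')^{\sigma_{-1}(X)}},
\]
the mod $W^0$ class of the gauge transform, where $\cU\subset V^0$ is a neighborhood of $Q$ to be shrunk as needed (this will be the $\cU$ of the statement). By assumption (b) this map is smooth in $X$ and jointly continuous, and by construction $(Q')^{\sigma_{-1}(X)}\in W^0$ holds precisely when $\Phi(X,Q')=0$. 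Since $Q\in W^0$ and $\sigma_{-1}(0)=0$, the flow \eqref{eq:gaugeeqgen} is stationary at $X=0$, so $\Phi(0,Q)=0$, and the whole problem is a small perturbation of this base solution.

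First I would compute the partial derivative $D_X\Phi(0,Q)$. Differentiating the defining initial value problem \eqref{eq:gaugeeqgen} along $X=s\,\sigma_{-1}(Y)$ at $s=0$, where $Q_t\equiv Q$, the variation $\partial_sQ_t|_{s=0}$ solves $\frac{d}{dt}V_t=\mu_1^Q(\sigma_{-1}(Y))$ with $V_0=0$, hence $V_1=\mu_1^Q(\sigma_{-1}(Y))$. Passing to the quotient and using that $\sigma_{-1}$ is a section, this gives
\[
D_X\Phi(0,Q)=\overline{\mu_1^Q}\colon V^{-1}/W^{-1}\longrightarrow V^0/W^0 .
\]
Its kernel is exactly the space $K:=\ker\bigl(\overline{\mu_1^Q}\colon V^{-1}/W^{-1}\to V^0/W^0\bigr)$ appearing in the statement, and its image is the coboundary space $B^0:=\mathrm{im}(\overline{\mu_1^Q})$. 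Here the hypothesis enters: $H^0(V/W,\overline{\mu_1^Q})=0$ means $B^0=Z^0$, where $Z^0:=\ker\bigl(\overline{\mu_1^Q}\colon V^0/W^0\to V^1/W^1\bigr)$, so the image of $D_X\Phi(0,Q)$ is precisely the cocycle space $Z^0$.

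Next I would run the implicit function theorem on the finite-dimensional quotients, in Lyapunov--Schmidt form. Choose complements $V^{-1}/W^{-1}=K\oplus K'$, so that $\overline{\mu_1^Q}|_{K'}\colon K'\xrightarrow{\ \sim\ }Z^0$, and $V^0/W^0=Z^0\oplus C$. Solving the $Z^0$-component $\pi_{Z^0}\Phi(x+y,Q')=0$ for $y\in K'$ is possible by the parametrized implicit function theorem, since $\pi_{Z^0}\circ D_y\Phi(0,Q)|_{K'}$ is the isomorphism $\overline{\mu_1^Q}|_{K'}$; this produces a map $y(x,Q')$, smooth in $x$ near $0\in K$ and continuous in $Q'$ near $Q$, with $y(0,Q)=0$. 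The candidate family is $I=\{x+y(x,Q')\ :\ x\ \text{near}\ 0\in K\}$, smoothly parametrized by a neighborhood of $0\in K$, exactly as required.

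The main obstacle is the remaining transverse equation: one must show that the $C$-component $\kappa(x,Q'):=\pi_C\Phi\bigl(x+y(x,Q'),Q'\bigr)$ vanishes identically for Maurer--Cartan $Q'$. This is precisely where gauge-invariance (c) and $H^0(V/W)=0$ must be combined. By (c) the element $(Q')^{\sigma_{-1}(x+y)}$ is again Maurer--Cartan, so $\Phi$ takes values in the image of the Maurer--Cartan locus in $V^0/W^0$; differentiating the Maurer--Cartan equation shows this image is tangent to $Z^0$ at $0$, and because $H^0(V/W)=0$ forces $Z^0=B^0$, every infinitesimal Maurer--Cartan direction is a gauge direction. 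The subtle point is upgrading this \emph{first-order} matching to a genuine finite statement: $\kappa$ is a gauge invariant whose order-by-order obstructions all live in $H^0(V/W)=0$, so it vanishes to all orders, and the implicit function theorem then promotes this to $\kappa\equiv0$ on a neighborhood (geometrically, the gauge orbit of $Q'$, projected to $V^0/W^0$, actually reaches $0$). I expect this verification---that the \emph{nonlinear} obstruction, not merely its linearization, is controlled by $H^0(V/W)$---to be the crux. Once it is established, $\Phi\bigl(x+y(x,Q'),Q'\bigr)=0$ for all $x$ near $0\in K$, which is exactly the assertion that every $x\in I$ satisfies $(Q')^{\sigma_{-1}(x)}\in W^0$.
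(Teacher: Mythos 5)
Your first half is sound and runs parallel to the paper's own argument: the map $\Phi(X,Q')=\overline{(Q')^{\sigma_{-1}(X)}}$ is the paper's $\mathrm{ev}_{Q'}$, your computation $D_X\Phi(0,Q)=\overline{\mu_1^Q}$ (via linearity in $X$ of the right-hand side of \eqref{eq:gaugeeqgen} and uniqueness of solutions) is property A) of the paper's proof, and your use of $H^0(V/W,\overline{\mu_1^Q})=0$ to identify $B^0=Z^0$ and run a finite-dimensional implicit-function/transversality argument producing the family $I$ matches the paper's second bullet point. The genuine gap is exactly the step you flag yourself: the vanishing of the transverse component $\kappa(x,Q')=\pi_C\Phi\bigl(x+y(x,Q'),Q'\bigr)$ for Maurer--Cartan $Q'$. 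Your proposed mechanism --- that $\kappa$ ``vanishes to all orders'' because the order-by-order obstructions lie in $H^0(V/W)=0$, and that the implicit function theorem ``promotes'' this to $\kappa\equiv 0$ --- does not work as stated. Nothing in the setting is analytic (the $V^i$ are merely locally convex, and $\Phi$ is only smooth in $X$ for fixed $Q'$), so formal vanishing at a point implies nothing in the $C^\infty$ category; the implicit function theorem has no means of upgrading formal to actual vanishing; and you derive only the first-order statement (tangency of the image of the Maurer--Cartan locus to $Z^0$), giving no actual mechanism placing the higher obstructions in $H^0(V/W)$.

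The paper closes precisely this gap with a second, \emph{finite} (non-perturbative) device: the map $R_{v,Q'}\colon V^0/W^0\to V^1/W^1$ of equation \eqref{eq:Rmap2}, namely the full Maurer--Cartan polynomial $\sum_{i=1}^n\frac{1}{i!}\mu_i(\cdot,\dots,\cdot)+W^1$ evaluated at $(X-\sigma_0(\overline{X}))+\sigma_0(\overline{Y})$, where $X=(Q')^{\sigma_{-1}(v)}$. Three exact identities then do the work: $R_{v,Q'}(0)=0$ because $X-\sigma_0(\overline{X})\in W^0$ and $W$ is an $L_\infty[1]$-subalgebra; $R_{v,Q'}(\mathrm{ev}_{Q'}(v))=0$ whenever $Q'$ is Maurer--Cartan, by hypothesis c) --- this is the exact nonlinear identity your $\kappa$-argument lacks; and $(DR_{0,Q})_0=\overline{\mu_1^Q}$, so $R_{v,Q'}$ restricted to the complement $C$ of $Z^0$ (the paper's $K$) is injective on a neighborhood $O$ of $0$, uniformly for $(v,Q')$ near $(0,Q)$. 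Since your solutions satisfy $\Phi\bigl(x+y(x,Q'),Q'\bigr)\in C$, and both this point and $0$ are zeros of $R_{v,Q'}$ lying in $O$ after shrinking, injectivity forces $\kappa=0$ \emph{exactly}, not just to first order. Substituting this construction for your heuristic paragraph turns your Lyapunov--Schmidt scheme into essentially the paper's proof. (A secondary point: your parametrized implicit function theorem needs joint $C^1$-type continuity of $\Phi$ in $(X,Q')$, which hypothesis b) alone does not supply; the paper obtains the analogous uniformity for $R$ from the continuity of the multibrackets in condition a), cf.\ remark \ref{rem:altcond} and lemma \ref{lem:suffcondcont}.)
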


In particular, $Q'$ is related (in the sense of definition \ref{def:gaugetransform}) to a Maurer-Cartan element lying in $W$.

\begin{remark}
{We provide a heuristic interpretation of the theorem. By the conclusions of the theorem, the map of moduli spaces of Maurer-Cartan elements induced by the inclusion
$$MC(W)/\sim \;\;\longrightarrow\; MC(V)/\sim$$
is surjective nearby $[Q]$.
The corresponding map of formal tangent spaces at $[Q]$ is
\begin{equation}\label{eq:formalderiv}H^0(W,\mu_1^Q) \longrightarrow H^0(V,\mu_1^Q).\end{equation}
By the hypotheses of the theorem, this linear map is surjective: indeed the obvious short exact sequence of cochain complexes gives rise to a long exact sequence in cohomology, a piece of which reads
$$\dots \longrightarrow H^0(W,\mu_1^Q)  \longrightarrow H^0(V,\mu_1^Q)\to H^0(V/W,\overline{\mu_1^Q})\longrightarrow \dots$$
While the moduli spaces of Maurer-Cartan elements are not smooth manifolds, and hence the regular value theorem can not be applied,  theorem \ref{thm:mainthm} shows that the vanishing of  $H^0(V/W,\overline{\mu_1^Q})$ is sufficient to obtain the same conclusion.} {It would be interesting to investigate under what conditions surjectivity of \eqref{eq:formalderiv} implies the conclusions of the theorem.} {This observation is analogous to the one made at the end of \cite[Remark 5.13]{Rigidity}. 
}

\end{remark}

\smallskip
{Before proving theorem \ref{thm:mainthm}, we present the main idea of the proof. The conclusion of the theorem suggests to consider, for every $Q'$ nearby $Q$, the map $$\text{ev}_{Q'}: V^{-1}/W^{-1}  \to V^0/W^0,\; v\mapsto (Q')^{\sigma_{-1}(v)} + W^0.$$ If the map $\text{ev}_{Q}$ was a submersion in a neighborhood of $0\in V^{-1}/W^{-1}$, the same would hold for its perturbation $\text{ev}_{Q'}$,  implying that its image would contain  the origin, as desired. While $\text{ev}_{Q}$ is almost never a submersion, it is transverse to a certain subspace $K\subset V^0/W^0$, therefore $\text{ev}_{Q'}$ too; from this, using the cohomological assumption
and the Maurer-Cartan condition on $Q'$, we will be able obtain the desired conclusion.}

\begin{proof}
 For simplicity, we take $U = V^{-1}/W^{-1}$, but the proof goes through for any convex open neighborhood of the origin {in $V^{-1}/W^{-1}$}.
In the first part of the proof we assume the existence of certain maps between the spaces $V^i/W^i$ for $i = -1,0,1$ with prescribed properties, from which the result follows. In the second  part we explicitly construct the maps.

{Assume the existence of the following maps:} 
\begin{itemize}
    \item[1)] A smooth map $$\text{ev}_{Q'}: V^{-1}/W^{-1}  \to V^0/W^0$$    depending continuously on $Q'\in V^0$,
    \item[2)] a   smooth map $$R_{v,Q'}:V^0/W^0\to V^1/W^1$$   depending continuously on $(v,Q') \in V^{-1}/W^{-1}\times V^0$,
\end{itemize}
with the following properties:
\begin{itemize}
    \item [A)] $\text{ev}_Q(0) = 0 \in V^0/W^0$, and {the derivative satisfies} 
    $$
    (D(\text{ev}_{Q}))_0 = \overline{\mu_1^Q}: V^{-1}/W^{-1}  \to V^0/W^0.
    $$
  Moreover, the element $(Q')^{\sigma_{-1}(v)}$  lies in the subspace $W^0\subset V^0$ if and only if $\text{ev}_{Q'}(v) = 0$.
    \item [B)] $R_{v,Q'}(0) = 0 \in V^1/W^1$ for every $(v,Q') \in V^{-1}/W^{-1}\times V^0$, and {the derivative of $R_{0,Q}$ satisfies}
    $$
    (D(R_{0,Q}))_0 = \overline{\mu_1^Q}\colon V^0/W^0 \to V^1/W^1.
    $$
    \item[C)] Whenever $Q'\in V^0$ is Maurer-Cartan, for every $v\in V^{-1}/W^{-1}$ we have:
    $$
    R_{v,Q'}(\text{ev}_{Q'}(v)) = 0.
    $$
\end{itemize}
{The following diagram summarizes diagrammatically the above maps.}
\[
\xymatrix{ V^{-1}/W^{-1} \ar[r]^{\text{ev}_{Q'}} &V^{0}/W^{0} \ar[r]^{R_{v,Q'}}&  V^{1}/W^{1}
\\
}.
\]

The conclusion of the theorem follows exactly as in \cite[theorem 3.20]{KarandeepStability}. We summarize the main ideas for the reader's convenience.
\begin{itemize}
    \item 
Let $K$ 
be a complement to $\ker(\overline{\mu_1^Q})$ 
in $V^0/W^0$. Property B) implies that $R_{0,Q}$ restricted to $K$ is an immersion at $0\in K$.  By continuity, for $(v,Q')$ close enough to $(0,Q)$, the same is true for $R_{v,Q'}$.  Therefore $R_{v,Q'}$ is injective in a neighborhood $O$ of $0\in K$.  The  neighborhood $O$  can be chosen independently of $(v,Q')$. 
\item
Property A) and the cohomological assumption imply that $\text{ev}_{Q}$ intersects $K$ transversely in $0$. Therefore, for any   $Q'$ close enough to $Q$, the map  $\text{ev}_{Q'}$ also intersects $K$ transversely, and there exists a $v\in V^{-1}/W^{-1}$ close to $0$ such that $\text{ev}_{Q'}(v) \in O$. 
\item 
When $Q'$ is Maurer-Cartan, using property C), {the fact that $R_{v,Q'}(0) = 0$ by property B), and the injectivity in the first item above,} it follows that 
$\text{ev}_{Q'}(v) = 0$. By construction, this means that  
  $(Q')^{\sigma_{-1}(v)}\in  W^0$, as desired. 
  More is true: { as $\text{ev}_{Q'}^{-1}(\{0\}) = \text{ev}_{Q'}^{-1}(O)$ is non-empty, the transversality argument above implies that $\text{ev}_{Q'}^{-1}(\{0\})$ is a submanifold of dimension equal to the one of $\ker(\overline{\mu_1^Q}:V^{-1}/W^{-1} \to V^0/W^0)$.}
\end{itemize}

We now define the maps $\text{ev}_{Q'}$ and $R_{v,Q'}$ used above.
\begin{itemize}
    \item [1)]Let $Q'\in V^0$. Then for $v \in V^{-1}/W^{-1}$, we set
$$
\text{ev}_{Q'}(v) = (Q')^{\sigma_{-1}(v)} + W^0.
$$
Then by {condition a) of the data at the beginning of this section,}
the map depends continuously on $Q'$ and smoothly on $v\in V$. 
\item[2)] Let $(v,Q') \in V^{-1}/W^{-1} \times V^0$. {To shorten the notation we write} $X:= (Q')^{\sigma_{-1}(v)}$, and $\overline{X} := X + W^0$.  
{
For $\overline{Y}  \in V^0/W^0$, we set 
\begin{align}\label{eq:Rmap2}
R_{v,Q'}(\overline{Y} ) = \sum_{i = 1}^n  \frac{1}{i!} \mu_i\Big((X- \sigma_0(\overline{X}))+\sigma_0(\overline{Y}),\dots,
(X-\sigma_0(\overline{X}))+\sigma_0(\overline{Y})\Big) + W^1.
\end{align}
}
{Since the multibrackets $\mu_i$ are continuous, the map $R$ depends continuously on the parameters $(v,Q') \in V^{-1}/W^{-1}\times V^0$.} 

{Notice that, as the $\mu_i$ are symmetric when the arguments have degree 0, we can use Newton's binomial formula to rewrite \eqref{eq:Rmap2} as 
\begin{align}\label{eq:Rmap}
R_{v,Q'}(\overline{Y}) = \sum_{i = 1}^n \sum_{j=0}^i \frac{1}{i!}\binom{i}{j} \mu_i(\underbrace{X- \sigma_0(\overline{X}),\dots, X-\sigma_0(\overline{X})}_{i-j \text{ times}},\underbrace{\sigma_0(\overline{Y}),\dots,\sigma_0(\overline{Y})}_{j \text{ times}} ) + W^1.
\end{align}
}
\end{itemize}

We check that the above maps satisfy properties {A), B), C)} above.
\begin{itemize}
\item[A)]  {The property $\text{ev}_Q(0) = 0$ holds {since $Q\in W^0$}, and 
the one} regarding the value of $\text{ev}_{Q'}$ holds by definition. For the derivative, we compute for $v\in V^{-1}/W^{-1}$ 
 \begin{align*}
    \left.\frac{d}{dt}\right|_{t=0} \text{ev}_{Q}(tv) &= \left.\frac{d}{dt}\right|_{t=0} (Q)^{t\sigma_{-1}(v)} + W^{0}\\
    &= \left. \mu_1^{Q^{t\sigma_{-1}(v)}}(\sigma_{-1}(v)) \right|_{t=0} + W^0\\
    &= {\overline{\mu_1^{Q}}(v)}.
\end{align*}
{Here in the second equality we used that 
$(Q)^{t\sigma_{-1}(v)}=\widetilde{Q}_t$, where the latter is the solution of $\frac{d}{dt}\widetilde{Q}_t = \mu_1^{\widetilde{Q}_t}(\sigma_{-1}(v))$ with initial condition $\widetilde{Q}_0 = Q$ (this is  a consequence of the fact that 
 the r.h.s. of \eqref{eq:gaugeeqgen} depends linearly on $X$ }{and the uniqueness of the solution of \eqref{eq:gaugeeqgen}.)}
\item[B)] 
{
The property $R_{v,Q'}(0) = 0 \in V^1/W^1$ 
holds since $X- \sigma_0(\overline{X})\in W^0$. 
To compute
    $(D(R_{0,Q}))_0 = \overline{\mu_1^Q}$
we notice that only the $j=1$ summand in \eqref{eq:Rmap} contributes.
    }

\item[C)] Finally, let $Q'\in V^0$, $v \in V^{-1}/W^{-1}$. As above we write $X = (Q')^{\sigma_{-1}(v)}$ and $\overline{X} = X + W^0 {=\text{ev}_{Q'}(v)}$.
 Then
{
\begin{align*}
     R_{v,Q'}(\text{ev}_{Q'}(v)) &=
     R_{v,Q'}(\overline{X}) =
      \sum_{i = 1}^n \frac{1}{i!} \mu_i(X,\dots, X) + W^1.
\end{align*}
If $Q'\in V^0$ is Maurer-Cartan, then $X$ also is (by condition c) at the beginning of this section), so the above expression vanishes.
}
\end{itemize}
 \end{proof}

\begin{remark}\label{rem:altcond}
     The continuity of the multibrackets $\mu_k: S^k(V^0) \to V^1$, {required in property a) at the beginning of this section,} was 
   {used to ensure that} 
    the map $R$ defined by
    \begin{align}\label{eq:Rpar}
        R: V^{-1}/W^{-1} \times V^0 &\to C^\infty(V^0/W^0, V^1/W^1)\\
        \nonumber (v,Q) \mapsto R_{v,Q}
    \end{align} is continuous {see item 2) of the proof of theorem \ref{thm:mainthm}).}
    Here the right hand side is equipped with the $C^1$-topology. There is however a different condition {to ensure this} which is easier to check, and which we provide in lemma \ref{lem:suffcondcont} below. 
\end{remark}
\begin{lemma}\label{lem:suffcondcont}
    Assume that there exists a closed subspace $F\subset V^0$ of finite codimension  such that the multibrackets
    $$
    \mu_k:S^k(V^0) \to V^1/W^1
    $$
    factor through $S^k(V^0/F)$. Then the map $R$ as in \eqref{eq:Rpar} is continuous.
\end{lemma}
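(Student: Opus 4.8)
The plan is to use the finite–codimension assumptions to reduce everything to finite dimensions, where multilinear maps are automatically continuous and all natural topologies on spaces of polynomial maps coincide. First I would record the finite–dimensionality consequences of the standing data: since $W^0,W^1$ and $F$ are closed of finite codimension, the quotients $V^0/W^0,\;V^1/W^1$ and $V^0/F$ are finite–dimensional Hausdorff topological vector spaces, and the quotient projections $\pi_{W^0},\pi_{W^1},\pi_F$ together with the splittings $\sigma_0,\sigma_{-1}$ are all continuous (a quotient projection, and a linear map out of a finite–dimensional space, is always continuous). Writing $\overline{\mu}_k:=\pi_{W^1}\circ\mu_k$ for the reduced multibrackets, the factoring hypothesis gives maps $\widetilde{\mu}_k\colon S^k(V^0/F)\to V^1/W^1$ with $\overline{\mu}_k=\widetilde{\mu}_k\circ S^k(\pi_F)$; these are multilinear maps between finite–dimensional spaces, hence automatically continuous. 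This is exactly the input that replaces property a).

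Next I would rewrite the defining formula \eqref{eq:Rmap2} in factored form. Setting $X=(Q')^{\sigma_{-1}(v)}$ and $\overline{X}=X+W^0=\pi_{W^0}(X)$, and using $\overline{\mu}_i=\widetilde{\mu}_i\circ S^i(\pi_F)$, one obtains
$$
R_{v,Q'}(\overline{Y}) = \sum_{i=1}^n \frac{1}{i!}\,\widetilde{\mu}_i\bigl(\alpha + L(\overline{Y}),\dots,\alpha + L(\overline{Y})\bigr),
$$
where $\alpha:=\pi_F\bigl(X-\sigma_0(\overline{X})\bigr)=\pi_F(X)-L(\pi_{W^0}(X))\in V^0/F$ and $L\colon V^0/W^0\to V^0/F,\ \overline{Y}\mapsto\pi_F(\sigma_0(\overline{Y}))$ is a fixed continuous linear map. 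The key observation is that, for fixed $(v,Q')$, this is a polynomial map of degree at most $n$ in $\overline{Y}$ (in particular smooth, so it genuinely lands in $C^\infty(V^0/W^0,V^1/W^1)$); expanding by symmetry exactly as in \eqref{eq:Rmap}, each homogeneous component is a symmetric multilinear expression whose only dependence on the parameters enters, polynomially, through the single element $\alpha=\alpha(v,Q')$.

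I would then check that $\alpha$ depends continuously on $(v,Q')$. By property b) the assignment $(v,Q')\mapsto X=(Q')^{\sigma_{-1}(v)}\in V^0$ is jointly continuous, and composing with the continuous maps $\pi_{W^0},\sigma_0,\pi_F,L$ shows that $\alpha$ depends continuously on $(v,Q')$. Consequently all coefficients of the polynomial $R_{v,Q'}$, being fixed polynomial expressions in $\alpha$ built from the fixed continuous data $\widetilde{\mu}_i$ and $L$, depend continuously on $(v,Q')$.

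Finally I would conclude by exploiting finite–dimensionality of the target function space. The polynomial maps $V^0/W^0\to V^1/W^1$ of degree at most $n$ form a finite–dimensional vector space $\mathcal{P}_n$, on which the (compact–open) $C^1$-topology restricts to a Hausdorff vector–space topology; by uniqueness of finite–dimensional TVS topologies it coincides with the topology of convergence of coefficients. Since $(v,Q')\mapsto R_{v,Q'}$ is continuous into $\mathcal{P}_n$ for the coefficient topology, it is continuous into $C^1(V^0/W^0,V^1/W^1)$, which is the assertion. The only step requiring genuine care is this last identification of topologies on $\mathcal{P}_n$; everything else is a direct consequence of the finite–codimension hypotheses forcing all relevant spaces to be finite–dimensional, so that the factoring condition substitutes cleanly for the continuity of the $\mu_k$.
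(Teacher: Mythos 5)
Your proof is correct and takes essentially the same approach as the paper: the paper's proof simply defers to \cite[lemma 3.22]{KarandeepStability}, noting that the factoring hypothesis forces $R$ to take values in the finite-dimensional space of polynomial maps of degree at most $n$, and your argument is a self-contained write-up of exactly that reduction. Your factoring of the reduced multibrackets through the finite-dimensional quotients, the continuity of the coefficients via property b), and the identification of the $C^1$-topology with the coefficient topology on the finite-dimensional polynomial space (by uniqueness of Hausdorff vector topologies in finite dimensions) are precisely the details the cited lemma supplies, adapted from the dgla case to polynomials of degree at most $n$.
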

\begin{proof}
    The proof is analogous to the proof of \cite[lemma 3.22]{KarandeepStability}, with the exception that the map $R$ now takes values in the finite-dimensional subspace consisting of \emph{polynomial} maps $V^0/F \to V^1/W^1$ of degree at most $n$.
\end{proof}

\section{Background on Dirac structures and their deformations}\label{sec:setup}
 
We recall the definition of Courant algebroids \cite{LWX}, Dirac structures \cite{Cou}, and following \cite{FZgeo} we review an $L_{\infty}[1]$-algebra governing their deformations.

\subsection{Courant algebroids}
We first need to introduce Courant algebroids \cite{LWX}.

\begin{defi}\label{ca}
A {\it Courant algebroid} over a manifold $M$ is a vector bundle $E
\to M$ equipped with a fibrewise non-degenerate symmetric bilinear
form $\la \cdot,\cdot \ra$, an $\RR$-bilinear bracket $\oo \cdot,\cdot\cc$ on
the smooth sections $\Gamma(E)$, and a bundle map $\rho: E\to TM$
called the \textit{anchor}, which satisfy the following conditions
for all $e_1,e_2,e_3\in \Gamma(E)$ and $f\in C^{\infty}(M)$:
\begin{itemize}
\item[C1)] $\oo e_1,\oo e_2,e_3\cc\cc = \oo \oo e_1,e_2\cc,e_3\cc +
\oo e_2,\oo e_1,e_3\cc\cc$,
\item[C2)] $\rho(\oo e_1,e_2\cc)=[\rho(e_1),\rho(e_2)\cc$,
\item[C3)] $\oo e_1,fe_2\cc=f\oo e_1,e_2\cc+ (\rho(e_1)f) e_2$,
\item[C4)] $\rho(e_1)\langle e_2,e_3 \rangle= \langle \oo e_1,e_2\cc,e_3 \rangle
+ \langle e_2, \oo e_1, e_3\cc\rangle$,
\item[C5)] $\oo e_1,e_1\cc = \cD \langle e_1,e_1 \rangle$.
\end{itemize}
Here we denote $\cD =\frac{1}{2}\rho^*\circ d: C^{\infty}(M)\rightarrow
\Gamma(E)$, upon 
identifying $E$
with $E^*$ using the bilinear form.
\end{defi}

 \begin{ep}[Exact Courant algebroids]\label{ep:TMT*M}
Let $M$ be a manifold, and $H$ a closed 3-form on $M$. 
The vector bundle $TM\oplus
T^*M$ acquires the structure of a Courant algebroid,  as follows \cite{SW}.  
The bilinear pairing is 
\begin{equation*}\label{pairing}
\langle X_1+\xi_1, X_2+\xi_2 \rangle = \xi_2(X_1) + \xi_1(X_2),
\end{equation*}
where $X_i+\xi_i \in \Gamma(TM\oplus TM^*)$, the anchor is the first projection, 
and the bracket is
\begin{equation}\label{Hcour}
\oo X_1+\xi_1, X_2+\xi_2\cc_H= [X_1,X_2] + \cL_{X_1} \xi_2 - i_{X_2} d\xi_1  + i_{X_2} i_{X_1} H.
\end{equation} 
We denote this Courant algebroid by $(TM\oplus T^*M)_H$
 (up to isomorphism it depends only on the cohomology class of $H$). When $H=0$ this is known as standard Courant algebroid structure.
\end{ep}

\begin{remark}\label{rem:ad}
 For any $\xi\in \Gamma(E)$, the map  $ad_{\xi}:=\oo\xi,\cdot \cc\colon \Gamma(E)\to \Gamma(E)$ is an infinitesimal automorphism of the Courant algebroid $E$. Assuming that 
 the vector field $\rho(\xi)$ is complete, $ad_{\xi}$ integrates to a 1-parameter group of automorphisms of the
 Courant algebroid $E$, which we denote by $e^{t\,ad_{\xi}}$. For instance, if $E$ is the $H$-twisted Courant algebroid as in example \ref{ep:TMT*M}, and we write $\xi=(X,\eta)\in \Gamma(TM\oplus T^*M)$, the 1-parameter group of automorphisms reads
 $e^{t\,ad_{\xi}}=(\varphi_t)_*\circ e^{B_t}$. Here $(\varphi_t)_*$
 is the tangent-cotangent lift of the flow $\varphi_t$ of $X$, and  $e^{B_t}$ is the so-called gauge transformation by the 2-form $B_t:=\int_0^t(\varphi_s)^*(d\eta-\iota_XH )\, ds$
 \cite[\S 2.2]{Hu2}\cite[\S 2.2]{Gu2}.
\end{remark}

Recall that a \emph{Lie algebroid} is a vector bundle $A\to M$ together with a Lie bracket $[\cdot,\cdot]$ on the sections $\Gamma(A)$ and a vector bundle map $\rho\colon A\to TM$ (called anchor), which are compatible in the sense that $[a_1,fa_2]=\rho(a_1)f\cdot a_2+f[a_1,a_2]$ for all sections $a_1,a_2$ and all $f\in C^{\infty}(M)$. The prototypical example is $A=TM$, and indeed Lie algebroids can be regarded as ``generalized tangent bundles''.  {Notice that at any point $p$, the bracket makes $\ker(\rho_p)$ into a Lie algebra, called isotropy Lie algebra.}
The Lie bracket and anchor of a Lie algebroid can be equivalently encoded by a degree $1$ derivation $d_A \colon \Gamma(\wedge^\bullet A^*)\to \Gamma(\wedge^{\bullet +1} A^*)$ satisfying $(d_A)^2=0$, called Lie algebroid differential, and defined by a formula analogous to the one for the de Rham differential on differential forms on a manifold.

\begin{ep}[Twisted doubles]\label{rem:double}
We generalize example \ref{ep:TMT*M}    replacing the tangent bundle $TM$ with any Lie algebroid.
Let $B$ be a Lie algebroid over $M$, and  $H\in \Gamma(\wedge^3 B^*)$ such that $d_BH=0$.
One then obtains a Courant algebroid structure on $B \oplus B^*$, with anchor given by the one of $B$ (thus vanishing on $B^*$), and with bracket given as\footnote{For this purpose, replace the Lie derivative appearing in eq. \eqref{Hcour} with Cartan's formula $\cL_{a}\xi:=\iota_a d_B\xi+d_B \iota_a\xi$ for all $a\in \Gamma(B)$ and $\xi\in \Gamma(B^*)$.} in \eqref{Hcour}. {Notice that the natural symmetric pairing on the fibers has split signature.} We denote this Courant algebroid by $(B \oplus B^*)_H$. When $H=0$, this Courant algebroid is known as the double of the Lie bialgebroid $(B,B^*)$ (where the latter is endowed with the trivial Lie algebroid structure).
\end{ep}

\subsection{Dirac structures}

\begin{defi}
Let $E\to M$ be a Courant algebroid.
  A
\emph{Dirac structure}  \cite{Cou} is a   
subbundle $A\subset E$ which is lagrangian w.r.t. the pairing (i.e. $A^{\perp}=A$), and which is involutive w.r.t. the Courant bracket.   
\end{defi}

\begin{remark}\label{Diracalgoid}
Notice that if $A\subset E$ is a Dirac structure, then the restrictions to $A$ of the anchor and of the Courant bracket make $A$ into a Lie algebroid. 
\end{remark}

\begin{ep}\label{ex:Dirac}
Following \cite{SW}, we present two classes of Dirac structure for the   Courant algebroid $(TM\oplus T^*M)_H$ of example \ref{ep:TMT*M}, where $H$ is a closed 3-form on $M$.

\begin{itemize}
    \item[a)]      Let $\omega$ be a 2-form on $M$, {and consider the associated vector bundle map $\omega^{\sharp}\colon TM\to T^*M, X\mapsto \iota_X\omega$}. Then  $\gr(\omega^{\sharp})$
 is a lagrangian subbundle of $TM\oplus T^*M$. It is a Dirac structure in $(TM\oplus T^*M)_H$ precisely when  $d\omega=-H$. 

\item[b)] Let $\pi$ be a bivector field on $M$,
 {and consider   $\pi^{\sharp}\colon T^*M\to TM, \xi\mapsto \iota_{\xi}\pi$}. 
 Then
$\gr(\pi^{\sharp})$
 is a lagrangian subbundle. {It is a Dirac structure in $(TM\oplus T^*M)_H$ precisely when $\pi$ is a \emph{$H$-twisted Poisson structure}, 
 meaning  that
\begin{equation}\label{eq:twistpoiss}
[\pi,\pi] = 2\wedge^3\pi^{\sharp}(H).
\end{equation}
}
\end{itemize}
\end{ep}

\begin{ep}\label{ex:DiracB}
Generalizing example \ref{ex:Dirac},     
{
let $B$ be a Lie algebroid over $M$, and  $H\in \Gamma(\wedge^3 B^*)$ such that $d_BH=0$.
Let $\pi\in \Gamma(\wedge^2 B)$ such that $[\pi,\pi]_B+2\wedge^3\pi^{\sharp}(H)
=0$. Then $\text{graph}(\pi)$ is a Dirac structure in  the Courant algebroid  $(B \oplus B^*)_H$ defined in remark \ref{rem:double}. 
}

In particular, $0\oplus B^*$ is a Dirac structure. 
When the twist $H$ is not exact, then this Dirac structure does not admit any Dirac complement. Indeed, any such complement would be the graph of an element $\Omega\in \Gamma(\wedge^2B^*)$ satisfying $-d_B\Omega=H$, yielding a contradiction. (In particular, $B \oplus 0$ is not a Dirac structure). 
 \end{ep}
 
 \begin{remark}\label{rem:untwist}
 Notice that for any $\omega \in \Gamma(\wedge^2 B^\ast)$, there is an isomorphism of Courant algebroids given by
 $$
\exp(\omega^{{\sharp}}):(B\oplus B^\ast)_H \to (B\oplus B^\ast)_{H-d\omega}, \quad \exp(\omega^{{\sharp}})(X+\alpha) = X + \alpha + \iota_X\omega.
$$
This was observed in \cite[\S 2.2]{Gu2} for $B = TM$. In particular, when $H$ is exact, we have $(B\oplus B^\ast)_H \cong (B\oplus B^\ast)_0$.
\end{remark}

{
\begin{remark}
A Courant algebroid over a manifold $M$ induces a partition of $M$ into immersed submanifolds of varying dimension (called leaves) which are tangent to the image of the anchor map. The same applies for Dirac structures.   
\end{remark}
}

\subsection{Deformations of Dirac structures}\label{subsec:defDirac}
Let $E\to M$ be a Courant algebroid, and let $A\subset E$ be a Dirac structure. In other to give a description of the Dirac structures nearby $A$, we make an auxiliary choice of lagrangian complement $K$ (so $E=A\oplus K$ as  vector bundles), and express the Courant algebroid structure in terms of $A$ and $K$.

\begin{remark}\label{rem:split}
A lagrangian complement of $A$ exists if and only if $E$ is of even rank $2n$ and the pairing has signature $(n,n)$
(see e.g. \cite[corollary 4.4]{kwdirac}).
\end{remark}

Since the bracket
$[\cdot,\cdot]_A:=  \oo \cdot,\cdot \cc|_A$  and the bundle map $\rho|_{A}\colon A\to TM$ make $A$ into
a Lie algebroid (see remark \ref{Diracalgoid}),
we denote by $d_A$ the corresponding  Lie algebroid differential (it squares to zero).

Identify  $K\cong A^*$ via the pairing on the fibers of $E$, i.e. via $K\overset{\simeq}{\longrightarrow} A^*,\ u\longmapsto\langle u,\,\cdot\, \rangle|_A$. Notice that 
$A^*$ is usually not a Dirac structure.
Similarly to the above, the restriction
$[\eta_1,\eta_2]_{A^*}:=pr_{A^*}(\oo(0,\eta_1),(0,\eta_2)\cc)$ on $\Gamma(A^*)$ and the bundle map $\rho|_{A^*}\colon A^*\to TM$ allow one to write
down a degree $1$ derivation $d_{A^*}$  
of $\Gamma(\wedge^\bullet A)$, which generally does not square to zero.

Consider also the map $$\Gamma(\wedge^2 A^*)\to \Gamma(A)\;,\; \eta_1 \wedge \eta_2 \mapsto pr_{A}(\oo(0,\eta_1),(0,\eta_2)\cc),$$
which measures the failure of $A^*$ to be a Dirac structure, 
and view it as an   element  $\Psi\in \Gamma(\wedge^3 A).$ 

 From $\Psi$,   $(A,[\cdot,\cdot]_A,\rho|_A)$, and    $(A^*,[\cdot,\cdot]_{A^*},\rho|_{A^*})$   one can reconstruct the Courant algebroid structure on $E=A \oplus A^*$:   the bracket is recovered  as
\begin{align}\label{CB}
&\oo (a_1,\eta_1),(a_2,\eta_2) \cc=\\
&\Big( [a_1,a_2]_A+\cL_{\eta_1}a_2-\iota_{\eta_2}d_{A^*}a_1+\Psi(\eta_1,\eta_2,\cdot)\;,\;[\eta_1,\eta_2]_{A^*}+\cL_{a_1}\eta_2-\iota_{a_2}d_{A}\eta_1\Big)\nonumber
\end{align}
and the anchor as $\rho_A+\rho_{A^*} \colon A\oplus A^* \to TM$  
{(\cite[\S 3.8]{DimaThesis}, see also \cite[\S 3.2]{YvetteAllThat}).}

The statement of \cite[lemma 2.6]{FZgeo} reads as follows\footnote{The global minus in front of the ternary bracket, was erroneously omitted in \cite[lemma 2.6]{FZgeo}.}:
\begin{prop}\label{prop:linfty}
The graded vector space $\Gamma(\wedge^\bullet A^*)[2]$ has an   $L_\infty[1]$-algebra structure\footnote{This $L_\infty[1]$-algebra structure depends on the choice of $K$, but it is independent of this choice up to $L_\infty[1]$-isomorphism \cite{GMS}\cite{TORTORELLADefDiracJacobi}.}
  $\{\mu_k\}$, whose only non-trivial multibrackets $\mu_1,\ \mu_2,\ \mu_3$ are defined as follows:
		\begin{align*}
		\mu_1(\alpha{[2]})&=(d_A\alpha){[2]}\\
		\mu_2(\alpha{[2]},\beta{[2]})&=(-1)^{|\alpha|}[\alpha,\beta]_{A^*}{[2]}\\
		\mu_3(\alpha{[2]}, \beta{[2]}, \gamma{[2]})&=-(-1)^{|\beta|} (\alpha^\sharp\wedge \beta^\sharp \wedge \gamma^\sharp) \Psi{[2]}.
		\end{align*}

Further,  MC elements $\pp \in \Gamma(\wedge^2 A^*)$ of this $L_\infty[1]$-algebra 
parametrize Dirac structures $L\subset E$ that are transverse to $K$, via
	\begin{equation*}
	L=\gr(\pp^{\sharp})=\{a{+}\iota_a\pp^{\sharp}\mid \xi\in A\}\subset A\oplus A^\ast\cong E.
	\end{equation*}
\end{prop}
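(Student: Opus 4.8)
The plan is to prove the two assertions separately: first that $(\Gamma(\wedge^\bullet A^*)[2],\{\mu_1,\mu_2,\mu_3\})$ satisfies the $L_\infty[1]$-axioms, and then that its Maurer--Cartan locus consists exactly of the Dirac structures transverse to $K$. For the algebraic assertion, I observe that since only $\mu_1,\mu_2,\mu_3$ are non-zero, the higher Jacobi identities \eqref{eq:higherjacobi} hold trivially for $n\geq 5$ and reduce to finitely many relations for $n=0,1,2,3,4$. The case $n=0$ is $\mu_1^2=0$, i.e. $(d_A)^2=0$, which holds because $A$ is a Lie algebroid (remark \ref{Diracalgoid}). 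The case $n=1$ says that $d_A$ is a graded derivation of the bracket $[\cdot,\cdot]_{A^*}$; the case $n=2$ expresses the Jacobiator of $[\cdot,\cdot]_{A^*}$ as a $d_A$-coboundary built from $\Psi$; and the cases $n=3,4$ encode the remaining compatibility of $[\cdot,\cdot]_{A^*}$ with $\Psi$ together with a Jacobi-type identity for $\Psi$ alone. The key point is that these five families of relations are precisely the conditions on the triple $(d_A,d_{A^*},\Psi)$ equivalent to the Courant algebroid axioms (C1)--(C5) for the reconstructed bracket \eqref{CB}, so each one can be derived by substituting \eqref{CB} into the corresponding axiom.

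A more efficient route, which I would prefer in order to avoid a lengthy case analysis, is to use the graded-symplectic description of Courant algebroids: the structure on $E=A\oplus A^*$ is encoded by a degree-$3$ Hamiltonian $\Theta$ on a graded symplectic manifold of degree $2$ satisfying $\{\Theta,\Theta\}=0$, with $\Theta$ splitting into pieces corresponding to $d_A$, $d_{A^*}$ and $\Psi$. The multibrackets $\mu_k$ then arise as the derived brackets of $\Theta$ restricted to the functions coming from $\Gamma(\wedge^\bullet A^*)$, and the entire hierarchy of $L_\infty[1]$-relations follows at once from $\{\Theta,\Theta\}=0$ together with the graded Jacobi identity for the Poisson bracket, bypassing the case-by-case verification.

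For the Maurer--Cartan assertion, I would first note that a subbundle of $A\oplus A^*$ of the same rank as $A$ and transverse to $K\cong A^*$ is the graph of a bundle map $A\to A^*$, and that the lagrangian condition for the pairing forces this map to be skew, hence equal to $\pp^\sharp$ for a unique $\pp\in\Gamma(\wedge^2 A^*)$; this gives $L=\gr(\pp^\sharp)$. It then remains to check that $\gr(\pp^\sharp)$ is involutive if and only if $\pp$ solves the Maurer--Cartan equation \eqref{eq:MCeq}, which here reads $d_A\pp+\tfrac12[\pp,\pp]_{A^*}-\tfrac16(\pp^\sharp\wedge\pp^\sharp\wedge\pp^\sharp)\Psi=0$. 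I would establish this by computing $\oo a_1+\iota_{a_1}\pp^\sharp,\,a_2+\iota_{a_2}\pp^\sharp\cc$ directly from \eqref{CB} and extracting the $A^*$-component measuring the failure of the result to lie back in $\gr(\pp^\sharp)$; this component is exactly the left-hand side of the Maurer--Cartan equation.

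The step I expect to be the main obstacle is the sign bookkeeping in the relations for $n=2,3,4$ (equivalently, matching the derived brackets to the prescribed $\mu_k$ with the correct Koszul signs, including the global sign in $\mu_3$). This is precisely the point where \cite[lemma 2.6]{FZgeo} required the correction noted in the footnote, so it warrants careful attention; the remaining computations are routine.
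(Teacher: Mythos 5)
Your proposal is correct, and in its preferred form it coincides with the argument behind the paper's source: the paper gives no proof of proposition \ref{prop:linfty}, quoting it from \cite[lemma 2.6]{FZgeo} (with the sign of $\mu_3$ corrected in the footnote), and the proof there is exactly your graded-symplectic route --- higher derived brackets of the degree-$3$ Hamiltonian $\Theta$ on $T^*[2]A[1]$ with $\{\Theta,\Theta\}=0$, applied to the abelian subalgebra $\Gamma(\wedge^\bullet A^*)$ via the projection whose kernel is a Lie subalgebra (the one hypothesis of Voronov's theorem you leave implicit, but which is easily checked here). Your treatment of the Maurer--Cartan correspondence --- lagrangian transversality forcing $L=\gr(\pp^{\sharp})$ for a unique $\pp\in\Gamma(\wedge^2 A^*)$, then involutivity via \eqref{CB} yielding $d_A\pp+\tfrac{1}{2}[\pp,\pp]_{A^*}-\tfrac{1}{6}(\pp^{\sharp}\wedge\pp^{\sharp}\wedge\pp^{\sharp})\Psi=0$ --- is likewise the standard computation used in that reference.
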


Here we define  $\alpha^{\sharp}a:=\iota_a \alpha$, and  
$$
(\alpha^\sharp\wedge\beta^\sharp\wedge \gamma^\sharp)(x_1\wedge x_2\wedge x_3)=\sum_{\sigma\in S_3}(-1)^\sigma \alpha^\sharp(x_{\sigma(1)})\wedge \beta^\sharp(x_{\sigma(2)})\wedge \gamma^\sharp(x_{\sigma(3)}),
$$
 for all homogeneous $\alpha,\beta,\gamma \in \Gamma(\wedge^\bullet A^*)$ and all $x_i\in \Gamma(A)$.

\section{Gauge equivalences for Dirac structures}
As in  \S \ref{subsec:defDirac}, let $E$ be a Courant algebroid, $A$ a Dirac structure, and choose a lagrangian complement, which we identify with $A^*$ using the pairing (hence $E=A\oplus A^*$ as vector bundles). The main result of this section is proposition \ref{prop:CAauto},  which gives a geometric description of the gauge equivalence relation that the $L_{\infty}[1]$-algebra of proposition \ref{prop:linfty} induces on the Dirac structures nearby $A$.

Recall from remark \ref{rem:ad} that any element $\xi\in \Gamma(A^*)$ induces a one-parameter group of Courant algebroid automorphisms defined for small $t$, via $e^{t\,ad_\xi}$, where $ad_\xi=\oo \xi, \cdot \cc$. We will use repeatedly the following fact, which follows immediately from eq. \eqref{CB}: 
$$ad_\xi a=(\cL_{\xi}a, -\iota_{a }d_{A}\xi)$$
for all $a\in \Gamma(A)$.

Let $\pp\in\Gamma(\wedge^2 A^*)$ be a Maurer-Cartan element of the $L_{\infty}[1]$-algebra   $\Gamma(\wedge^\bullet A^*)[2]$
of proposition \ref{prop:linfty} (hence $\gr(\pp^{\sharp})$
 is a Dirac structure).
For any compactly supported $\xi\in \Gamma(A^*)$, we obtain a smooth one-parameter family of 
Maurer-Cartan elements, given by the {unique} solution $\pp_t$ of the equation
\begin{equation}\label{eq:gaugeeq}
\dot{\pp}_t=-d_A\xi+[\xi,\pp_t]_{A^*}+\frac{1}{2}(\xi^{\sharp}\wedge \pp_t^{\sharp}\wedge \pp_t^{\sharp})\Psi,
\end{equation}
subject to the initial condition $\pp_0=\pp$.
This is the gauge equation associated to the element $-\xi$ in the    $L_\infty[1]$-algebra $\Gamma(\wedge^\bullet A^*)[2]$, cf. equation \eqref{eq:gaugeeqgen}.

 \begin{remark}
 {We call \emph{gauge equivalence relation} the equivalence relation on Maurer-Cartan elements  generated by the following: two Maurer-Cartan elements are related if they can be written as 
 $\pp_0$ and $\pp_1$ as above for some $\xi\in \Gamma(A^*)$. For a comparison of the gauge equivalence relation with other notions in terms of polynomial paths
 found in the literature, see \cite[proposition 9]{DotsenkoPoncin} (see also \cite[remark 5.22]{KraftSchnitzerIntro}).
 }
 \end{remark}
 
{The following proposition states that the 1-parameter family of Dirac structures  $\gr(\pp^{\sharp}_t)$ is obtained applying Courant algebroid automorphisms to $\gr(\pp^{\sharp})$.}
 
\begin{prop}\label{prop:CAauto} Let $E=A\oplus A^*$ be a Courant algebroid as in \S \ref{subsec:defDirac}. 
Let  $\xi\in \Gamma(A^*)$ be compactly supported, and $\pp\in\Gamma(\wedge^2 A^*)$ be a Maurer-Cartan element {of the $L_{\infty}[1]$-algebra of proposition \ref{prop:linfty}}. 
Let $\pp_t\in\Gamma(\wedge^2 A^*)$ be determined by the property
\begin{equation}
\label{eq:graph}
\gr(\pp^{\sharp}_t)=e^{t\,ad_{\xi}}\gr(\pp^{\sharp}),
\end{equation}
for   $t\in \RR$ close enough to zero.

Then $\pp_t$ is the unique solution of eq. \eqref {eq:gaugeeq} satisfying $\pp_0=\pp$. 
\end{prop}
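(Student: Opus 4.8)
The plan is to show that the family $\pp_t$ defined geometrically by the graph condition \eqref{eq:graph} satisfies the differential equation \eqref{eq:gaugeeq}, and then invoke uniqueness of solutions. First I would verify the initial condition: at $t=0$ we have $e^{0\cdot ad_\xi}=\mathrm{id}$, so $\gr(\pp_0^\sharp)=\gr(\pp^\sharp)$, forcing $\pp_0=\pp$. The heart of the matter is to differentiate the graph condition at a generic time $t$. A typical element of $\gr(\pp_t^\sharp)$ has the form $a+\iota_a\pp_t^\sharp$ for $a\in\Gamma(A)$, and by \eqref{eq:graph} this equals $e^{t\,ad_\xi}(b+\iota_b\pp^\sharp)$ for a suitable $b\in\Gamma(A)$ (which itself may depend on $t$). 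Differentiating in $t$ and using $\frac{d}{dt}e^{t\,ad_\xi}=ad_\xi\circ e^{t\,ad_\xi}$, I would obtain a relation in $E=A\oplus A^*$ whose $A^*$-component encodes $\dot\pp_t$ and whose $A$-component tracks the correction coming from $\dot b$. The key computational input is the explicit formula $ad_\xi a=(\cL_\xi a,-\iota_a d_A\xi)$ recalled just before the proposition, together with the bracket formula \eqref{CB} which expresses $ad_\xi$ on an arbitrary element of $A\oplus A^*$ in terms of $d_A$, $[\cdot,\cdot]_{A^*}$, and $\Psi$.

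The main obstacle, as usual in such graph-differentiation arguments, is bookkeeping: when I differentiate $a+\iota_a\pp_t^\sharp = e^{t\,ad_\xi}(b+\iota_b\pp^\sharp)$, the element $a$ realizing a fixed point of the moving graph is not constant, so I must carefully separate the intrinsic change of $\pp_t$ from the reparametrization of the base $A$. The clean way to handle this is to project: apply $pr_A$ and $pr_{A^*}$ to the differentiated identity. The $A$-component determines $\dot a$ (or $\dot b$) and can be discarded, while the $A^*$-component yields precisely $\dot\pp_t^\sharp(a)$ evaluated against $a$, i.e. the desired expression for $\dot\pp_t$. Expanding $ad_\xi(a+\iota_a\pp_t^\sharp)$ via \eqref{CB} with $\eta_1=\xi$, $\eta_2=\iota_a\pp_t^\sharp$, the term $\cL_\xi a$ and $-\iota_a d_A\xi$ reproduce the $-d_A\xi$ and $[\xi,\pp_t]_{A^*}$ contributions, while the $\Psi$-term in \eqref{CB} produces the quadratic $\tfrac12(\xi^\sharp\wedge\pp_t^\sharp\wedge\pp_t^\sharp)\Psi$ term after accounting for the two insertions of $\pp_t^\sharp$.

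Having shown that the geometrically defined $\pp_t$ solves \eqref{eq:gaugeeq} with $\pp_0=\pp$, I would conclude by uniqueness: \eqref{eq:gaugeeq} is a (quadratic, hence locally Lipschitz) ODE in $\Gamma(\wedge^2A^*)$ for compactly supported data, so its solution with the given initial condition is unique for $t$ near $0$, and it must coincide with the family determined by \eqref{eq:graph}. I expect the genuinely delicate point to be confirming the exact numerical coefficient $\tfrac12$ on the $\Psi$-term and the sign in front of $d_A\xi$; matching these against the conventions fixed in proposition \ref{prop:linfty} (in particular the sign corrections noted in its footnote) is where I would spend the most care, since everything else is dictated by the structure equations \eqref{CB} and the infinitesimal action $ad_\xi$.
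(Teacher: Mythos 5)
Your proposal is correct and follows essentially the same route as the paper: differentiate the graph relation, separate the $A$- and $A^*$-components via the projections (tracking the moving base point $x^a_t = pr_A\bigl(e^{t\,ad_\xi}(a+\pp^{\sharp}a)\bigr)$, which is exactly your ``bookkeeping'' step), expand $ad_\xi$ using \eqref{CB} together with $ad_\xi a=(\cL_\xi a,-\iota_a d_A\xi)$, and conclude by uniqueness of solutions of \eqref{eq:gaugeeq}. The two delicate points you flag are precisely the ones the paper isolates as separate lemmas: lemma \ref{lem:idLA} (the identity $\iota_a[\xi,\pp]_{A^*}=[\xi,\pp^{\sharp}a]_{A^*}-\pp^{\sharp}(\cL_\xi a)$, which combines the bracket term with the Lie-derivative correction and is proved there via the graded-symplectic description of the Courant algebroid) and lemma \ref{lem:cubic} (the coefficient $\tfrac12$ on the $\Psi$-term).
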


\begin{remark}\label{rem:iso}
Since $pr_A\colon \gr(\pp^{\sharp})\to A$ is an isomorphism, by continuity we have that
$pr_A\colon e^{t\,ad_{\xi}}\gr(\pp^{\sharp})\to A$ is an isomorphism for  $t$ in an open interval around zero, since $\xi$ is compactly supported.
\end{remark}

\begin{proof}
Given $a\in A$, we use the notation $$Y^a_t:=e^{t\,ad_{\xi}}(a+\pp^{\sharp}a).$$ Then
the R.H.S. of eq. \eqref{eq:graph} can be written as
$\{Y^a_t: a\in A\}$.
So eq.   \eqref{eq:graph} is equivalent to the condition that 
\begin{equation}\label{eq:grapht}
\pp_t^{\sharp}(pr_A(Y^a_t))=pr_{A^*}(Y^a_t) 
\end{equation}
for all $a\in A$ (here we made use of remark \ref{rem:iso}).
 
Now adopt the notation $$x^a_t:=pr_A( Y^a_t).$$
Notice that 
\begin{equation}\label{eq:advY}
ad_{\xi} Y^a_t=ad_{\xi}\left(x^a_t+pr_{A^*}( Y^a_t)\right)=ad_{\xi}(x^a_t)+ad_{\xi}(\pp_t^{\sharp}x^a_t)
\end{equation}
 using eq. \eqref{eq:grapht} in the last equality.

For every section $a\in \Gamma(A)$, we take the time derivative of eq. \eqref{eq:grapht}, and write it out using
 eq. \eqref{CB} and \eqref{eq:advY}:
\begin{itemize}
\item taking the time derivative of the LHS we get 
$$\dot{\pp}_t^{\sharp}(pr_A( Y^a_t))+\pp_t^{\sharp}(pr_A(ad_{\xi}  Y^a_t))=
\dot{\pp}_t^{\sharp}(x^a_t)+\pp_t^{\sharp}\left((\cL_\xi x^a_t)+ \Psi(\xi,\pp_t^{\sharp} x^a_t,\,\cdot\,)\right).$$
\item
Taking the time derivative of the RHS  of eq. \eqref{eq:grapht}, we get  
 $$pr_{A^*}(ad_{\xi}  Y^a_t)=-\iota_{x^a_t}d_A\xi+[\xi, \pp_t^{\sharp} x^a_t]_{A^*}.$$
\end{itemize}

 Hence the time derivatives of the LHS and RHS of  eq. \eqref{eq:grapht} are the same if{f}
 \begin{equation}\label{eq:ddt}
 \dot{\pp}_t^{\sharp}(x^a_t)=-\iota_{x^a_t}d_A\xi+[\xi, \pp_t^{\sharp} x^a_t]_{A^*}-\pp_t^{\sharp}(\cL_\xi x^a_t)- \pp_t^{\sharp}(\Psi(\xi,\pp_t^{\sharp} x^a_t,\,\cdot\,)).
\end{equation}

 Using lemma \ref{lem:idLA} and lemma \ref{lem:cubic} below, we see that the RHS of eq. \eqref{eq:ddt} can be written as  
 $$\iota_{x^a_t}\left(-d_A\xi+[\xi,\pp_t]_{A^*}+\frac{1}{2}(\xi^{\sharp}\wedge \pp_t^{\sharp}\wedge \pp_t^{\sharp})\Psi\right).$$
 This, together with remark \ref{rem:iso}, shows that 
  $\pp_t$  is the unique solution of  the differential equation \eqref{eq:gaugeeq} with $\pp_0=\pp$.
\end{proof}

\begin{lemma}\label{lem:idLA}
For all $\xi\in \Gamma(A^*), \pp\in \Gamma(\wedge^2A^*), a\in \Gamma(A)$ the following identity holds:
$$\iota_a[\xi,\pp]_{A^*}=[\xi,\pp^{\sharp} a]_{A^*}-\pp^{\sharp}(\cL_\xi a).$$
\end{lemma}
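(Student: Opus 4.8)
The plan is to recognize the asserted identity as the Cartan-calculus relation $[\cL_\xi,\iota_a]=\iota_{\cL_\xi a}$ for the bracket structure on $A^*$, and to observe that it holds even though $(A^*,[\cdot,\cdot]_{A^*},\rho|_{A^*})$ is only an \emph{almost} Lie algebroid (recall that $d_{A^*}$ need not square to zero). The first step is bookkeeping of what each symbol means in the calculus of $A^*$: writing $\pp^{\sharp}a=\iota_a\pp$ and $\pp^{\sharp}(\cL_\xi a)=\iota_{\cL_\xi a}\pp$, and noting that $[\xi,\pp^{\sharp}a]_{A^*}=\cL_\xi(\iota_a\pp)$ is just the Lie derivative of the section $\iota_a\pp\in\Gamma(A^*)$ along $\xi$, while $[\xi,\pp]_{A^*}=\cL_\xi\pp$ is the Schouten bracket of $\xi$ with the bivector $\pp$, the claim becomes equivalent to
\[
\cL_\xi(\iota_a\pp)=\iota_{\cL_\xi a}\pp+\iota_a(\cL_\xi\pp),
\]
that is, to the statement that $\cL_\xi$ is a derivation of the contraction $(a,\pp)\mapsto\iota_a\pp$ between the $1$-forms $\Gamma(A)$ and the multivectors $\Gamma(\wedge^\bullet A^*)$ of the almost Lie algebroid $A^*$.

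To carry this out cleanly I would pair both sides with an arbitrary $a'\in\Gamma(A)$, turning the $A^*$-valued identity into a scalar one. Using the defining formula $(\cL_\xi a')(\zeta)=\rho|_{A^*}(\xi)\langle a',\zeta\rangle-\langle a',[\xi,\zeta]_{A^*}\rangle$ for the Lie derivative of a $1$-form, together with $\langle a',\iota_a\pp\rangle=\pp(a,a')$, the right-hand side becomes
\[
\rho|_{A^*}(\xi)\bigl(\pp(a,a')\bigr)-\pp(\cL_\xi a,a')-\pp(a,\cL_\xi a'),
\]
whereas the left-hand side is $(\cL_\xi\pp)(a,a')$. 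These agree precisely because $\cL_\xi$ obeys the Leibniz rule on the scalar $\pp(a,a')$, which is nothing but the definition of the Lie derivative of the $2$-vector $\pp$ evaluated on $a\wedge a'$. As $a'$ is arbitrary and the pairing between $A$ and $A^*$ is non-degenerate, this establishes the identity.

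The one point requiring attention, and the place where one might worry, is that the verification must never invoke the Jacobi identity for $[\cdot,\cdot]_{A^*}$, since that is exactly the property $A^*$ lacks; I expect this to be the main (though mild) obstacle. It is harmless here: every step above uses only the anchor--Leibniz compatibility $[\xi,f\zeta]_{A^*}=f[\xi,\zeta]_{A^*}+(\rho|_{A^*}(\xi)f)\zeta$ and the duality between $\cL_\xi$ on sections and on $1$-forms, both of which hold for the almost Lie algebroid $A^*$. Consequently no integrability of $A^*$ is needed, and the remaining work is purely the bookkeeping of signs in the Schouten bracket and contraction conventions of proposition \ref{prop:linfty}; the pairing trick is designed precisely to minimize this, since it reduces everything to the elementary Leibniz rule for a scalar function.
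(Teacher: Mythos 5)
Your proof is correct, but it takes a genuinely different route from the paper's. The paper works supergeometrically: it encodes the Courant algebroid $A\oplus A^*$ by a degree-$3$ function $\Theta$ on $T^*[2]A[1]$ with its canonical degree $-2$ Poisson bracket $\{\cdot,\cdot\}$, shows that $\{\{\Theta,\xi\},\pp\}$ equals $[\xi,\pp]_{A^*}$ modulo a term in $\Gamma(A^*\otimes A)$, and then obtains the identity from the \emph{graded Jacobi identity of} $\{\cdot,\cdot\}$ --- which, unlike Jacobi for $[\cdot,\cdot]_{A^*}$, does hold, since the failure of $A^*$ to be Dirac is stored in $\Theta$ rather than in the bracket of the graded symplectic manifold. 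You instead run a direct almost-Lie-algebroid Cartan-calculus computation, pairing with a test section $a'$ and reducing everything to Leibniz rules; your insistence on never invoking Jacobi for $[\cdot,\cdot]_{A^*}$ is exactly the right concern, and your argument indeed uses only the anchored Leibniz rule and duality, so it is more elementary and self-contained, while the paper's argument buys brevity and consistency with the formalism of \cite{FZgeo} used to set up the $L_\infty[1]$-algebra in proposition \ref{prop:linfty}. Two small points you should make explicit. First, your ``defining formula'' $\langle\cL_\xi a,\zeta\rangle=\rho(\xi)\langle a,\zeta\rangle-\langle a,[\xi,\zeta]_{A^*}\rangle$ is not the paper's definition: there $\cL_\xi a$ is given by Cartan's formula $\iota_\xi d_{A^*}a+d_{A^*}\iota_\xi a$, equivalently $pr_A\oo \xi,a\cc$, so you owe a one-line check that the two agree --- immediate either from the Koszul formula defining $d_{A^*}$, or from axiom C4) of definition \ref{ca} together with the isotropy of $A$ and $A^*$ (which kills the cross-terms $\langle pr_{A^*}\oo\xi,a\cc,\zeta\rangle$ and $\langle a,\Psi(\xi,\zeta,\cdot)\rangle$). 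Second, $[\xi,\pp]_{A^*}$ for $\pp\in\Gamma(\wedge^2A^*)$ denotes the extension of the binary bracket as a derivation of the wedge product, so its identification with your formula $(\cL_\xi\pp)(a,a')=\rho(\xi)\bigl(\pp(a,a')\bigr)-\pp(\cL_\xi a,a')-\pp(a,\cL_\xi a')$ is itself part of the Leibniz bookkeeping rather than a definition. Neither point is a gap in substance: both verifications are one-liners that use no integrability of $A^*$, and with them inserted your argument is complete.
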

\begin{proof}  
Let $\Theta$ denote the degree $3$ function  on $T^*[2]A[1]$ that, together with the degree $-2$ Poisson bracket of ``functions'' $\{\cdot,\cdot\}$, encodes the Courant algebroid structure of $A\oplus A^*$ (see \cite[\S 2.2]{FZgeo} and references therein). 

\emph{Claim:} $\{ \{\Theta, \xi\},\pp\}$ equals $[\xi,\pp]_{A^*}\in \Gamma(\wedge^2A^*)$ plus an element of $\Gamma(A^*\otimes A)$.

To prove the claim, we may assume that $\pp=\eta_1\wedge \eta_2$ for $\eta_i\in \Gamma(A^*)$. 
Notice that by definition $\{ \{\Theta, \xi\},\eta_1\}=\oo \xi, \eta_1 \cc$ equals $[x,\eta_1]_{A^*}$ plus an element of $\Gamma(A)$.
The claim follows from applying the Leibniz rule to 
$\{ \{\Theta, \xi\},\eta_1\cdot \eta_2\}$.
 
From the claim it follows that for all $b\in \Gamma(A)$,
\begin{equation}\label{eq:super}
\iota_b\iota_a[\xi,\pp]_{A^*}=\{b,\{a,\{ \{\Theta, \xi\},\pp\}\}\}.
\end{equation}
Now the graded Jacobi identity for $\{\cdot,\cdot\}$ implies 
\begin{align*}
\{a,\{ \{\Theta, \xi\},\pp\}\}= -\{ \{\Theta, \xi\},\{\pp,a\}\}-\{\{ \{\Theta, \xi\},a\},\pp\}=\oo \xi,\iota_a\pp \cc-\pp^{\sharp}(\cL_\xi a),
\end{align*}
where to compute the last term we used that the restriction of $\{\cdot,\cdot\}$
to $\Gamma(A^*\otimes A)$ is the pairing, that
 $A^*$ is isotropic and $pr_A \oo \xi,a \cc=\cL_\xi a$. It follows that the R.H.S. of 
 eq. \eqref{eq:super} equals $\iota_b\left([\xi,\iota_a\pp]_{A^*}-\pp^{\sharp}(\cL_\xi a)\right)$.
  \end{proof}

\begin{lemma}\label{lem:cubic}
For all $\xi\in \Gamma(A^*), \pp\in \Gamma(\wedge^2A^*), a\in \Gamma(A)$ and $\Psi\in \Gamma(\wedge^3 A)$ the following identity holds:
$$-\pp^{\sharp}\left(\Psi(\xi,\pp^{\sharp} a,\,\cdot\,)\right)=
\frac{1}{2}\iota_a\left((\xi^{\sharp}\wedge \pp^{\sharp}\wedge \pp^{\sharp})\Psi\right)$$
\end{lemma}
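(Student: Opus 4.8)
The plan is to observe that the claimed identity is \emph{tensorial}: no derivatives appear on either side, so both sides are $C^{\infty}(M)$-multilinear in $\xi$, $a$ and $\Psi$, and quadratic in $\pp$. Hence it suffices to verify the identity fiberwise, at an arbitrary point $p\in M$, where it becomes a statement in the finite-dimensional vector space $A_p$ and its dual. I would fix a basis $\{e_i\}$ of $A_p$ with dual basis $\{e^i\}$, and write $\pp=\tfrac12\pp_{ij}\,e^i\wedge e^j$, $\Psi=\tfrac16\Psi^{ijk}\,e_i\wedge e_j\wedge e_k$, $\xi=\xi_ie^i$ and $a=a^le_l$, with $\pp_{ij}$ antisymmetric and $\Psi^{ijk}$ totally antisymmetric. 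The only basic contraction rules needed are $\pp^{\sharp}(e_l)=\pp_{lj}e^j$ and $\xi^{\sharp}(e_l)=\xi_l$.

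For the left-hand side I would first compute $\pp^{\sharp}a=a^l\pp_{lm}e^m$, then contract $\Psi$ to get $\Psi(\xi,\pp^{\sharp}a,\,\cdot\,)=\Psi^{imk}\xi_i\,a^l\pp_{lm}\,e_k$, and finally apply $\pp^{\sharp}$ once more, obtaining $-\pp^{\sharp}\big(\Psi(\xi,\pp^{\sharp}a,\,\cdot\,)\big)=-\Psi^{imk}\xi_i\,a^l\pp_{lm}\pp_{kj}\,e^j$. For the right-hand side I would use the explicit formula for $\xi^{\sharp}\wedge\pp^{\sharp}\wedge\pp^{\sharp}$ given immediately after proposition \ref{prop:linfty}. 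Evaluating it on $e_p\wedge e_q\wedge e_r$ produces a sum over $S_3$; since $\pp^{\sharp}$ occupies the last two (antisymmetric) slots, the six terms pair up into three, and then the total antisymmetry of $\Psi^{ijk}$ makes these three equal after relabeling the summation indices. The combinatorial factors (the $\tfrac16$ from $\Psi$, the factor $2$ from the symmetry of the two $\pp^{\sharp}$ slots, the factor $3$ from the collapse, and the $\tfrac12$ and $\iota_a$ in the statement) then combine to give $\tfrac12\iota_a\big((\xi^{\sharp}\wedge\pp^{\sharp}\wedge\pp^{\sharp})\Psi\big)=a^l\Psi^{imk}\xi_i\pp_{ml}\pp_{kj}\,e^j$. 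Using $\pp_{ml}=-\pp_{lm}$ this matches the left-hand side exactly.

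I expect the only real difficulty to be bookkeeping: keeping track of the interior-product signs and of the numerical factors $\tfrac12,\tfrac16,2,3$ so that they cancel correctly. To guard against slips I would, as a sanity check, verify that the intermediate coefficient tensor $C_{st}:=\Psi^{imk}\xi_i\pp_{ms}\pp_{kt}$ is genuinely antisymmetric in $s,t$ (which follows by swapping $m\leftrightarrow k$ and using antisymmetry of $\Psi$), confirming that $(\xi^{\sharp}\wedge\pp^{\sharp}\wedge\pp^{\sharp})\Psi$ indeed lands in $\Gamma(\wedge^2A^*)$. An alternative, index-free route would be to expand both sides on a decomposable $\Psi=u\wedge v\wedge w$ using multilinearity, but the component computation above is the most transparent and is what I would write up.
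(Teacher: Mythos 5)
Your proposal is correct: the identity is indeed tensorial, your contraction conventions match the paper's (namely $\alpha^{\sharp}a=\iota_a\alpha$ and the explicit $S_3$-sum defining $\xi^{\sharp}\wedge\pp^{\sharp}\wedge\pp^{\sharp}$), and your index bookkeeping — including the cancellation $\tfrac16\cdot 2\cdot 3=1$ and the factor $2$ absorbed by the $\tfrac12$ via the antisymmetry of $C_{st}$ — checks out. This is essentially the paper's proof: the authors carry out the same direct pointwise verification, merely reducing to decomposable $\Psi=x_1\wedge x_2\wedge x_3$ and using the relation $\langle \pp^{\sharp}x_2,a\rangle=-\langle x_2,\pp^{\sharp}a\rangle$ instead of basis indices (the index-free route you mention as an alternative at the end).
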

\begin{proof}
The L.H.S. equals $\Psi(\xi,\pp^{\sharp} a,\pp^{\sharp}\cdot)$. For the R.H.S., we may assume that $\Psi$ is decomposible, i.e. $\Psi=x_1\wedge x_2\wedge x_3$ for $x_i\in \Gamma(A)$. We then compute $$(\xi^{\sharp}\wedge \pp^{\sharp}\wedge \pp^{\sharp})\Psi=2 \xi^{\sharp}(x_1)  \cdot \pp^{\sharp}x_2\wedge  \pp^{\sharp}x_3 +{cycl.\; perm.},$$
and using the relation $\langle \pp^{\sharp}x_2, a \rangle=-\langle x_2, \pp^{\sharp} a \rangle$ it follows that
\begin{equation*} \iota_a\left((\xi^{\sharp}\wedge \pp^{\sharp}\wedge \pp^{\sharp})\Psi\right)=
2\Psi(\xi,\pp^{\sharp} a,\pp^{\sharp}\,\cdot\,).
\end{equation*}	
\end{proof}
\begin{remark}
{The statement of proposition \ref{prop:CAauto} admits a version in which $\xi\in \Gamma(A^*)$ is replaced by a smooth 1-parameter family of elements of $\Gamma(A^*)$,   providing
a unified approach to the geometric characterization of 
the equivalences of various kinds of geometric structures (e.g. the foliations and pre-symplectic structures worked out in \cite{SZpreequi}).}
\end{remark}

\section{An application: Stability of fixed points of Dirac structures}\label{sec:stabDirac}

{
We start with a definition:
\begin{defi}
Consider a Courant algebroid $E$ with anchor map $\rho\colon E\to TM$ and a Dirac structure $A$. We say that a point $p\in M$ is a \emph{fixed point of $A$} whenever $\rho(A_p)=0$.
\end{defi}
\noindent In this section we obtain a stability criterion for fixed points of Dirac structures. We do so applying suitably theorem \ref{thm:mainthm}; this yields proposition \ref{thm:stableDirac}, which we then express in more geometric and explicit terms as theorem \ref{thm:stableDirac2}.
}

\subsection{Applying theorem \ref{thm:mainthm}}

Assume that we are in the setup of \S \ref{subsec:defDirac}: 
 $E\to M$ is a Courant algebroid,  $A\subset E$   a Dirac structure. Make an auxiliary choice of lagrangian complement to $A$, identify the complement with $A^*$ via the pairing, and denote by $\rho_{A^\ast}$ the restriction of the anchor of $E$ to $A^*$.
Suppose that $p \in M$ is a fixed point of the Dirac structure $A$, i.e. {$\rho(A_p)=0$}. {We want to apply theorem \ref{thm:mainthm} to the following data:}

\begin{itemize}
    \item[i)]
    We take the  $L_\infty[1]$-algebra $V = \Gamma(\wedge^\bullet A^\ast)[2]$, with the brackets $\mu_1,\mu_2,\mu_3$ as in proposition \ref{prop:linfty}. We equip
    $V^{-1}$ with the $C^\infty$-topology, $V^0$ with the $C^1$-topology and $V^1$ with the $C^0$-topology.
    \item[ii)] 
    We take the   $L_\infty[1]$-subalgebra $W\subset V$
    defined by 
    \begin{equation}\label{eq:dirsubalg}
    W^i:= \{\Lambda \in \Gamma(\wedge^{i+2} A^\ast) \mid \Lambda_p \in \wedge^{i+2} \ker((\rho_{A^\ast})_p)\}.
    \end{equation}
    In lemma \ref{lem:subalg}  we check that $W$ indeed is an $L_{\infty}[1]$-subalgebra and that $W^i\subset V^i$ is closed {for $i=-1,0,1$}. Notice that the Maurer-Cartan elements of $W$ are precisely those $\pp \in \Gamma(\wedge^2 A^\ast)$, such that $\gr(\pp^{\sharp})$ is Dirac, and $p\in M$ is a fixed point of $\gr(\pp^{\sharp})$.
    
    \item[iii)] {For $i=-1,0$ we pick splittings $\sigma_{i}: V^i/W^i\to V^i=\Gamma(\wedge^{i+2} A^\ast)$ consisting of compactly supported sections. This is possible since  $V^i/W^i$ are finite-dimensional vector spaces.}
    \item[iv)] As Maurer-Cartan element in $W^0$ we pick $Q = 0$. Notice that we are considering  {Maurer-Cartan elements near $0$, which correspond to Dirac structures near $A$.}
    \end{itemize}
    These choices satisfy the properties  required {just before theorem \ref{thm:mainthm}}:
    \begin{itemize}
     \item[a)]
    {As pointed out in remark \ref{rem:altcond}, the continuity of the multibrackets was required in order to make the map $R$ in equation \eqref{eq:Rpar} continuous. However, lemma \ref{lem:suffcondcont} provides an alternative condition for $R$ to be continuous. We therefore instead check that the conditions of lemma \ref{lem:suffcondcont} are satisfied. }\\
    Note that the values of $\mu_1$ and $\mu_2$ in a point $q\in M$ only depend on the first jet of the arguments in $q$. Moreover, the value of $\mu_3$ in a point only depends on the values of the arguments in $q$. Consequently, $F = I_p^2\Gamma(\wedge^2 A)$ satisfies the assumptions of lemma \ref{lem:suffcondcont}.
    \item[b)] By the choice of lifts to compactly supported sections {in iii) above}, the gauge action exists for all $t\in \RR$: {for any $\xi \in \Gamma(A^*)$ the action $e^{t\,ad_{\xi}}$}   is defined as long as the flow of $\rho(\xi)\in \mathfrak X(M)$ is defined. {As $U$} we can therefore take any open neighborhood of the origin {in $V^{-1}/W^{-1}$}. The continuity and smoothness assertions of the gauge action follow from a standard argument using the smoothness of $\pp \in \Gamma(\wedge^2 A^\ast)$ and the fact that the topology on the $V^i$ is defined by uniform convergence of some jet of $\pp$ on compact sets.
    \item[c)] Note that an element $\pp \in V^0$ is Maurer-Cartan if and only if its graph is involutive, by proposition \ref{prop:linfty}. As the gauge action is by Courant algebroid automorphisms ({see proposition \ref{prop:CAauto}}), involutivity is preserved.
\end{itemize}

We apply theorem \ref{thm:mainthm} to the data i)-iv) above. {In doing so we   invoke proposition \ref{prop:CAauto}, and we use that $\mu_1=d_A$.
 We further use the isomorphism of chain complexes $(V/W, \overline{\mu_1})\cong 
 \left(\frac{\wedge^{\bullet} A^\ast_p}{\wedge^{\bullet}\ker((\rho_{A^\ast})_p)}[2],\overline{d_A}\right)$ given by evaluation at $p$ (hence the differential $\overline{d_A}$ is computed extending to an element of $\Gamma(\wedge^{\bullet} A^\ast)$, applying $d_A$ and evaluating at $p$).}
We obtain:

\begin{prop}
\label{thm:stableDirac}
{Let $E\to M$ be a Courant algebroid with anchor $\rho$, and let $A\subset E$ be a Dirac structure. Choose a  lagrangian complement, which we canonically identify with $A^*$  via the pairing.} 
Let $p\in M$ be a fixed point of the Dirac structure $A$, i.e.  $\rho(A_p)=0$. 
Assume that
$$
H^2\left(\frac{\wedge^{\bullet} A^\ast_p}{\wedge^{\bullet}\ker((\rho_{A^\ast})_p)},\overline{d_A}\right) = 0.
$$

Then there exists an $C^1$-open neighborhood $\mathcal U$ of $0 \in \Gamma(\wedge^2 A^\ast)$, such that for any $\pp \in \mathcal{U}$ for which $\gr(\pp^{\sharp})$ is Dirac, the following holds:
there is a {smooth} family $I \subset A^\ast_p/\ker((\rho_{A^\ast})_p)$, parametrized by a neighborhood of $$ 0 \in \ker\left({\overline{d_A}}: \frac{A^\ast_p}{\ker((\rho_{A^\ast})_p)} \to \frac{\wedge^2 A^\ast_p}{\wedge^2 \ker((\rho_{A^\ast})_p)}\right),$$ 
with the property that $x \in I$ implies:
\begin{equation}\label{eq:fixed}
    p\text{ is a fixed point of } 
{e^{ad_{\sigma_{-1}(x)}}(\gr(\pp^{\sharp}))}.
\end{equation}
{Here $\sigma_{-1}\colon \frac{A^\ast_p}{\ker((\rho_{A^\ast})_p)} \to \Gamma(A^*)$ is a fixed splitting taking values in compactly supported sections.}
\end{prop}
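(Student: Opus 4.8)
My plan is to obtain this proposition as a direct application of Theorem \ref{thm:mainthm} to the data i)--iv) assembled above. Since hypotheses a)--c) have already been verified, and lemma \ref{lem:subalg} supplies that $W$ is a closed $L_\infty[1]$-subalgebra of finite codimension, the only work left is two translations: first, matching the cohomological hypothesis of the present proposition with the one of Theorem \ref{thm:mainthm}; and second, rephrasing the abstract conclusion of that theorem in geometric terms. I would carry these out in turn.

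For the cohomological matching, I would use that $V = \Gamma(\wedge^\bullet A^\ast)[2]$, so $V^i = \Gamma(\wedge^{i+2}A^\ast)$, and that the chosen Maurer-Cartan element is $Q = 0$, whence $\mu_1^Q = \mu_1 = d_A$ by \eqref{eq:muQ} and proposition \ref{prop:linfty}. Because $p$ is a fixed point, the anchor of $A$ vanishes at $p$, so the value $(d_A\Lambda)_p$ is tensorial in $\Lambda_p$ and agrees with the Chevalley--Eilenberg differential of the isotropy Lie algebra $A_p$; evaluation at $p$ therefore yields the isomorphism of cochain complexes $(V/W,\overline{\mu_1^Q}) \cong \left(\frac{\wedge^\bullet A_p^\ast}{\wedge^\bullet\ker((\rho_{A^\ast})_p)}[2],\overline{d_A}\right)$ stated before the proposition. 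The shift $[2]$ identifies $H^0$ of the left-hand complex with $H^2$ of the right-hand one, so the standing assumption $H^2(\dots) = 0$ is exactly the hypothesis $H^0(V/W,\overline{\mu_1^Q}) = 0$ of Theorem \ref{thm:mainthm}.

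For the conclusion, Theorem \ref{thm:mainthm} produces a $C^1$-open neighborhood $\mathcal U \subset V^0 = \Gamma(\wedge^2 A^\ast)$ of $0$ and, for each Maurer-Cartan element $\pp \in \mathcal U$ (equivalently, each $\pp$ with $\gr(\pp^\sharp)$ Dirac), a smooth family $I$ parametrized by a neighborhood of the origin in $\ker(\overline{\mu_1^Q}\colon V^{-1}/W^{-1} \to V^0/W^0)$ with the property that $x \in I$ implies $(\pp)^{\sigma_{-1}(x)} \in W^0$. Under the evaluation isomorphism, $V^{-1}/W^{-1} \cong A_p^\ast/\ker((\rho_{A^\ast})_p)$ and the displayed kernel matches. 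It then remains to unwind $(\pp)^{\sigma_{-1}(x)} \in W^0$: by the characterization of the Maurer-Cartan elements of $W$ recorded in item ii), membership in $W^0$ says precisely that the Dirac structure associated to $(\pp)^{\sigma_{-1}(x)}$ has $p$ as a fixed point, while proposition \ref{prop:CAauto} identifies that Dirac structure with $e^{ad_{\sigma_{-1}(x)}}(\gr(\pp^\sharp))$. This gives \eqref{eq:fixed}.

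The main obstacle is the faithful bookkeeping in these two translations rather than any new computation. In the first, one must check that $\overline{d_A}$ descends to a pointwise operator, which hinges on the fixed-point condition $\rho(A_p) = 0$ and is what makes evaluation at $p$ a morphism of complexes; this is also the content underlying lemma \ref{lem:subalg}. In the second, the delicate point is to identify the abstract gauge transform $Q^X$ of definition \ref{def:gaugetransform}, solved at time $t = 1$, with the Courant-algebroid automorphism $e^{ad_{\sigma_{-1}(x)}}$ of proposition \ref{prop:CAauto}, including the direction and the time-one normalization of the flow. Any sign discrepancy in the gauge direction is harmless, being absorbed by reparametrizing $x \mapsto -x$ within the neighborhood $I$ of the origin. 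Once these identifications are in place, the proof is complete.
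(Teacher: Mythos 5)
Your proposal is correct and follows essentially the same route as the paper, which likewise obtains the proposition by applying theorem \ref{thm:mainthm} to the data i)--iv), using the evaluation-at-$p$ isomorphism $(V/W,\overline{\mu_1})\cong \left(\frac{\wedge^{\bullet} A^\ast_p}{\wedge^{\bullet}\ker((\rho_{A^\ast})_p)}[2],\overline{d_A}\right)$ together with $\mu_1=d_A$, and invoking proposition \ref{prop:CAauto} to identify the gauge transform with the automorphism $e^{ad_{\sigma_{-1}(x)}}$. Your explicit handling of the sign in the gauge direction (absorbed by $x\mapsto -x$, cf.\ the fact that \eqref{eq:gaugeeq} is the gauge equation for $-\xi$) is a point the paper leaves implicit, but it is consistent with its argument.
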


\begin{remark}\label{rem:stabplus}
Instead of $W$ defined by equation \eqref{eq:dirsubalg}, one could take $\widetilde{W}^\bullet := I_p\Gamma(\wedge^{\bullet+2} A^\ast)$. Notice that Maurer-Cartan elements in $\widetilde{W}$ correspond to Dirac structures which coincide with $A$ at $p$. Applying theorem \ref{thm:mainthm} to this data, {one obtains the following statement: assume the vanishing of $$H^2(\Gamma(\wedge^{\bullet} A^\ast)/I_p\Gamma(\wedge^\bullet A^*)) \cong H^2(\wedge^\bullet A^*_p,\overline{d_{A}}).$$} Then for any Dirac structure $L$ near $A$, there is a family $I\subset A_p^\ast$, parametrized by a neighborhood of $$
0 \in \ker(\overline{  d_A}:A_p^\ast \to \wedge^2 A_p^\ast),
$$
with the property that $x \in I$ implies: 
$$
(e^{ad_{\sigma_{-1}(x)}}(L))_p = A_p.
$$
{From this it follows} that $p$ is a fixed point of $e^{ad_{\sigma_{-1}(x)}}(L)$, but in general the converse does not hold {(see remark \ref{rem:stablesection} later on)}.
\end{remark}

 \subsection{A geometric restatement}

In this subsection we rephrase the hypotheses and the conclusions  of proposition  \ref{thm:stableDirac}, obtaining in theorem \ref{thm:stableDirac2} a  geometric statement which does not make reference to any choice of lagrangian complements.

Expressing
  the conclusion of proposition \ref{thm:stableDirac} in terms of $\gr(\pp^{\sharp})$, we see that  $\gr(\pp^{\sharp})$ has a fixed point nearby $p$:
\begin{lemma}\label{cor:fixedpt}
{Let $\pp \in  \Gamma(\wedge^2 A^\ast)$ and $x\in I$  be as in proposition \ref{thm:stableDirac}.} Then \eqref{eq:fixed} holds if and only if:
$$\text{$\phi_{-\rho(\sigma_{-1}(x))}(p)$ is a fixed point of $\pp$}.$$
Here $\phi_{-\rho(\sigma_{-1}(x))}$ denotes the time-$1$ flow of the vector field $-\rho(\sigma_{-1}(x))$ on $M$.
\end{lemma}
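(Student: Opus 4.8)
The plan is to use that the gauge transformation $e^{ad_{\sigma_{-1}(x)}}$ is an automorphism of the Courant algebroid $E$ covering an explicit diffeomorphism of the base, and to track how the fixed-point condition behaves under it.

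First I would set $\xi:=\sigma_{-1}(x)\in\Gamma(A^*)$; since $\xi$ is compactly supported, the vector field $\rho(\xi)$ is complete and its time-$1$ flow $\varphi:=\phi_{\rho(\xi)}$ is a diffeomorphism of $M$. By remark \ref{rem:ad}, $\Phi:=e^{ad_{\xi}}$ is a Courant algebroid automorphism covering $\varphi$. The structural fact I would invoke is the compatibility of such an automorphism with the anchor, namely $\rho\circ\Phi=d\varphi\circ\rho$. Its infinitesimal counterpart is exactly axiom C2 of definition \ref{ca}, which reads $\rho(ad_{\xi}e)=[\rho(\xi),\rho(e)]$; integrating this identity along the flow of $\rho(\xi)$ produces the finite relation.

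Next I would compute the fiber at $p$ of the transformed Dirac structure. Because a bundle automorphism covering $\varphi$ sends the fiber over $q$ to the fiber over $\varphi(q)$, we have $(\Phi(\gr(\pp^{\sharp})))_p=\Phi\big(\gr(\pp^{\sharp})_{\varphi^{-1}(p)}\big)$. Applying $\rho$ and using $\rho\circ\Phi=d\varphi\circ\rho$ then gives
$$\rho\big((\Phi(\gr(\pp^{\sharp})))_p\big)=d\varphi\big(\rho(\gr(\pp^{\sharp})_{\varphi^{-1}(p)})\big).$$
As $d\varphi$ is a linear isomorphism on each tangent space, the left-hand side vanishes if and only if $\rho(\gr(\pp^{\sharp})_{\varphi^{-1}(p)})=0$, i.e. if and only if $\varphi^{-1}(p)$ is a fixed point of $\pp$. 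This already establishes the claimed equivalence, with the auxiliary point $\varphi^{-1}(p)$ in place of the one in the statement.

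Finally I would identify $\varphi^{-1}(p)$: the inverse of the time-$1$ flow of $\rho(\xi)$ is the time-$1$ flow of $-\rho(\xi)$, so $\varphi^{-1}(p)=\phi_{-\rho(\xi)}(p)=\phi_{-\rho(\sigma_{-1}(x))}(p)$, which is precisely the point appearing in the statement. I expect no serious obstacle here; the only points requiring care are the correct statement of the anchor compatibility for an automorphism covering a nontrivial diffeomorphism, and the bookkeeping of the inverse flow (equivalently the sign of the generating vector field), both of which are routine once set up.
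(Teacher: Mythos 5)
Your proposal is correct and is essentially the paper's own argument: the proof there also reduces to the anchor compatibility $\rho \circ e^{-ad_\xi} = (\phi_{-\rho(\xi)})_\ast \circ \rho$ for the Courant algebroid automorphism and applies it to the fiber of the transformed Dirac structure at $p$, just working directly with $e^{-ad_\xi}$ where you use $e^{ad_\xi}$ and then invert the flow. Your additional remark that this compatibility is the integrated form of axiom C2 is a correct and welcome justification that the paper leaves implicit.
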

\begin{proof}
Write $\xi := \sigma_{-1}(x)$.
Since $e^{-ad_{\xi}}$ is a Courant algebroid automorphism, we have
$
\rho \circ e^{-ad_\xi} = (\phi_{-\rho(\xi)})_\ast \circ \rho$.
where $\phi_{-\rho(\xi)}$ is the time one flow of the vector field $-\rho(\xi)$. Hence for all $Y\in E_p$ we have
$$\text{$\rho_{\phi_{-\rho(\xi)}(p)}(e^{-ad_\xi} Y)=0$ if{f} $\rho_p(Y)=0$.}$$ 
Applying this to all $Y\in e^{ad_{\xi}}\gr(\pp^{\sharp})$ the conclusion follows.
\end{proof}

{We address how the fixed points of lemma \ref{cor:fixedpt} depend on the parameters.}
\begin{lemma}\label{lem:diffeo}
The map 
$$\Phi\colon A^*_p/\ker(\rho_{A^\ast_p})\to N,\;\; y\mapsto \phi_{-\rho(\sigma_{-1}(y))}(p),$$
restricted to a suitable neighborhood of the origin,  
is  a  diffeomorphism onto its image.
Here  $N$ is the leaf of the Courant algebroid through $p$.
\end{lemma}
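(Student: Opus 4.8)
The plan is to show that $\Phi$ is a local diffeomorphism at the origin by computing its differential there and invoking the inverse function theorem, after first checking that domain and codomain have the same dimension and that $\Phi$ really takes values in the leaf $N$.

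First I would record the relevant dimension count. Since $p$ is a fixed point, $\rho_p(A_p)=0$, so $T_pN=\rho_p(E_p)=\rho_p(A^\ast_p)=(\rho_{A^\ast})_p(A^\ast_p)$. Thus $(\rho_{A^\ast})_p\colon A^\ast_p\to T_pM$ has image $T_pN$ and kernel $\ker((\rho_{A^\ast})_p)$, inducing a canonical linear isomorphism $\overline{(\rho_{A^\ast})_p}\colon A^\ast_p/\ker((\rho_{A^\ast})_p)\to T_pN$. In particular the domain of $\Phi$ and the leaf $N$ have the same finite dimension. I would also note $\Phi(0)=p$, since $\sigma_{-1}(0)=0$ and the flow of the zero vector field is the identity.

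Next I would verify that $\Phi$ genuinely maps into $N$ and is smooth as a map into $N$. For any $\xi\in\Gamma(A^\ast)\subset\Gamma(E)$ the vector field $\rho(\xi)$ is a section of the singular distribution $\mathrm{im}(\rho)$, hence tangent to the leaves; as $\sigma_{-1}(y)$ is compactly supported the flow $\phi_{-\rho(\sigma_{-1}(y))}$ is complete and preserves the partition of $M$ into leaves, so $\Phi(y)\in N$ for every $y$. Since leaves of a Courant algebroid are initial (weakly embedded) submanifolds, the map $\Phi$, which is manifestly smooth into $M$ (the flow depends smoothly on the vector field $\rho(\sigma_{-1}(y))$, which in turn depends linearly, hence smoothly, on $y$), is automatically smooth as a map into $N$. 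I would then compute the differential: by linearity of $\sigma_{-1}$, the curve $t\mapsto\Phi(ty)$ is the time-$t$ flow of $-\rho(\sigma_{-1}(y))$ started at $p$, so
\[
D\Phi_0(y)=\left.\frac{d}{dt}\right|_{t=0}\phi_{-t\rho(\sigma_{-1}(y))}(p)=-\rho_p\big(\sigma_{-1}(y)_p\big)=-\overline{(\rho_{A^\ast})_p}(y),
\]
where in the last step I use that $\sigma_{-1}(y)_p\in A^\ast_p$ represents the class $y$. By the first paragraph $-\overline{(\rho_{A^\ast})_p}$ is a linear isomorphism onto $T_pN$, so $D\Phi_0$ is invertible, and the inverse function theorem yields that $\Phi$, restricted to a suitable neighborhood of the origin, is a diffeomorphism onto an open subset of $N$.

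The main obstacle is the second step: ensuring that $\Phi$ is smooth \emph{into the leaf} $N$ rather than merely into $M$, which rests on anchor-image vector fields being tangent to the leaves and on the leaves being initial submanifolds. Once this is in place, the differential computation and the inverse function theorem are routine.
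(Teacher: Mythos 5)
Your proposal is correct and follows essentially the same route as the paper: the anchor-induced isomorphism $A^*_p/\ker((\rho_{A^\ast})_p)\to T_pN$ (valid since $\rho(A_p)=0$), the identity $\phi^1_{-t\rho(\sigma_{-1}(y))}=\phi^t_{-\rho(\sigma_{-1}(y))}$ to compute $D\Phi_0$, and the inverse function theorem. The only (minor) difference is in packaging: the paper isolates the flow-and-IFT step as lemma \ref{lem:spray}, applied directly to vector fields on the leaf $N$ and with smoothness of $\Phi$ obtained via the auxiliary vector field $Y(q,v)=X^v(q)$ on $N\times T_pN$, whereas you work with the flow on $M$ and pass to $N$ using tangency of $\rho$-image vector fields to the leaves and initiality of leaves --- both justifications are sound.
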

\begin{proof}
    The anchor at $p$ induces a linear isomorphism \begin{equation}\label{eq:isoleaf}
 A^*_p/\ker(\rho_{A^\ast_p})\to \rho(A^*_p)=\rho(E_p)
    \end{equation}
onto $T_pN$.
(The equality holds since $\rho(A_p)=\{0\}$).
In terms of the splitting $\sigma_{-1}$, the above isomorphism is $y\mapsto \rho_{A^\ast}(\sigma_{-1}(y))|_p$.  
{Composing first with $-Id_{T_pN}$  and then with the map $\Psi$ obtained applying lemma \ref{lem:spray}, we obtain exactly $\Phi$.} 
\end{proof}

We now rephrase the hypotheses of proposition \ref{thm:stableDirac}, without making reference to the choice of lagrangian complement.

\begin{lemma}\label{lem:hypo}
Consider the Lie algebra $\g:=A_p$, and  denote by   $d_{\g}$
its Chevalley-Eilenberg differential.

i) The subspace \begin{equation}\label{eq:hideal}
\mathfrak{h}:=(\ker(\rho|_{E_p}))^{\perp}
\end{equation}
is a Lie ideal of $\g$. {Here the orthogonal is taken w.r.t. the symmetric pairing.}

ii) The cochain complex appearing in proposition \ref{thm:stableDirac} agrees with
$\left(\frac{\wedge^{\bullet} \g^*}{\wedge^{\bullet}\h^{\circ}},\overline{d_{\g}}\right)$. In particular, it is independent of the choice of lagrangian complement to $A$.
\end{lemma}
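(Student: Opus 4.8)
The plan is to treat the two parts together, since the ideal property in (i) is precisely what makes the Chevalley--Eilenberg differential descend to the quotient appearing in (ii). For (i), I would first record two linear-algebra facts at $p$. Since $\rho(A_p)=0$ we have $A_p\subseteq\ker(\rho|_{E_p})$, and taking orthogonals together with $A_p^{\perp}=A_p$ (lagrangianity) gives $\mathfrak{h}=(\ker(\rho|_{E_p}))^{\perp}\subseteq A_p=\g$. Second, under the identification $E_p\cong E_p^*$ via the pairing one has $(\ker\rho_p)^{\perp}=\mathrm{im}(\rho^*_p)$, so $\mathfrak{h}=\mathrm{im}(\rho^*_p)$; as $f$ ranges over $C^{\infty}(M)$ the values $\cD f|_p=\tfrac12\rho^*(d_pf)$ span $\mathrm{im}(\rho^*_p)$, whence $\mathfrak{h}=\{\cD f|_p: f\in C^{\infty}(M)\}$.

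To show $\mathfrak{h}$ is an ideal, fix $v\in\g$ and $h=\cD f|_p\in\mathfrak{h}$, and choose $a\in\Gamma(A)$ with $a(p)=v$. The isotropy bracket is $[v,h]_{\g}=\oo a,b\cc(p)$ for any $b\in\Gamma(A)$ with $b(p)=h$, using that $\oo\cdot,\cdot\cc|_A=[\cdot,\cdot]_A$ is $A$-valued by involutivity. I would replace $b$ by the section $\cD f$ of $E$: since $(b-\cD f)(p)=0$, writing $b-\cD f$ locally with coefficients vanishing at $p$ and applying the Leibniz rule C3 shows $\oo a,b-\cD f\cc(p)=0$, because every surviving term carries a factor $\rho(a)|_p=\rho(v)=0$. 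Hence $[v,h]_{\g}=\oo a,\cD f\cc(p)$. Finally the Courant identity $\oo a,\cD f\cc=\cD(\rho(a)f)$ --- which I would derive from C4, C2 and $\langle\cD g,e\rangle=\tfrac12\rho(e)g$ --- yields $[v,h]_{\g}=\cD(\rho(a)f)|_p\in\mathfrak{h}$, proving $[\g,\mathfrak{h}]\subseteq\mathfrak{h}$.

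For (ii), the core is the identification $\ker((\rho_{A^*})_p)=\mathfrak{h}^{\circ}$ inside $\g^*=A_p^*$. Setting $U:=\ker((\rho_{A^*})_p)$ and using $E_p=A_p\oplus A_p^*$ with $\rho(A_p)=0$, one gets $\ker(\rho|_{E_p})=A_p\oplus U$, so $\mathfrak{h}=(A_p\oplus U)^{\perp}=A_p\cap U^{\perp}=U^{\circ}$, and taking annihilators gives $\mathfrak{h}^{\circ}=U$. This already identifies the underlying graded spaces $\frac{\wedge^{\bullet}A_p^*}{\wedge^{\bullet}U}=\frac{\wedge^{\bullet}\g^*}{\wedge^{\bullet}\mathfrak{h}^{\circ}}$. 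For the differentials, I would note that when $d_A\alpha$ is evaluated at the fixed point $p$, every term involving the anchor vanishes because $\rho(a_i)|_p=0$ for $a_i(p)\in A_p$; only the bracket terms survive, so $d_A\alpha|_p=d_{\g}(\alpha|_p)$, whence $\overline{d_A}=\overline{d_{\g}}$ on the quotient (which is well defined precisely by the ideal property from (i)). Independence of the lagrangian complement is then immediate, as $\g$, its bracket, and $\mathfrak{h}$ are defined without reference to $K$.

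The main obstacle is the isotropy-bracket computation in (i): one must justify computing $[v,h]_{\g}$ through $\cD f$, which is only a section of $E$ and not of $A$, and check that the discrepancy between $\cD f$ and a genuine $A$-extension of $h$ contributes nothing at $p$. This is exactly where the hypothesis $\rho(A_p)=0$ enters decisively, through the vanishing $\rho(a)|_p=0$, both to kill the Leibniz correction terms and to make the Courant identity land back in $\mathfrak{h}$.
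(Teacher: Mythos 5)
Your proof is correct, and part (i) takes a genuinely different route from the paper's. The paper proves the ideal property ``covectorially'': it pairs $\oo a,h\cc$ against an arbitrary $e\in\ker(\rho|_{E_p})$, applies axiom C4 to get $\langle e,\oo a,h\cc\rangle = -\langle\oo a,e\cc,h\rangle+\rho(a)\langle e,h\rangle$, and kills both terms at $p$ using that $\ker(\rho|_{E_p})$ is closed under the bracket (via C2) and that $\rho(a)_p=0$. You instead work ``vectorially'': you identify $\h=(\ker(\rho|_{E_p}))^\perp$ with $\mathrm{im}(\rho^*_p)=\{\cD f|_p : f\in C^\infty(M)\}$ and show the adjoint action preserves this set via the Courant identity $\oo a,\cD f\cc=\cD(\rho(a)f)$, after correctly disposing of the discrepancy between $\cD f$ and a genuine $A$-valued extension of $h$ using C3 and $\rho(a)_p=0$ (this well-definedness step is exactly the delicate point, and you handle it properly). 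The two arguments are dual to one another and both hinge on $\rho(A_p)=0$; the paper's is slightly more elementary (only C4 and C2 are needed), while yours buys the conceptually useful description of $\h$ as the image of $\cD$ at $p$, which makes the conclusion $[\g,\h]\subseteq\h$ transparent. For part (ii) your argument matches the paper's in substance --- both reduce to the identification $\ker((\rho_{A^\ast})_p)=\h^\circ$, which the paper obtains by an annihilator computation inside $A_p$ and you by the decomposition $\ker(\rho|_{E_p})=A_p\oplus\ker((\rho_{A^\ast})_p)$ plus double annihilators --- but you are more explicit than the paper on one point it leaves implicit: the equality $d_A\alpha|_p=d_{\g}(\alpha|_p)$ at a fixed point (the anchor terms in the Lie algebroid differential vanish since $\rho(A_p)=0$), which is what justifies that evaluation at $p$ is a chain map onto the Chevalley--Eilenberg complex and that $\overline{d_A}=\overline{d_{\g}}$ on the quotient.
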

 
 \begin{proof}
 {We first motivate the definition of $\h$.}
 For any lagrangian complement $K$ of $A$, recall that we make use of the identification $K\cong A^*, k\mapsto \langle k, \,\cdot\, \rangle|_A$.
{Using  that $A_p$ is lagrangian and that the anchor vanishes on  $A_p$, one can see that under this identification, $\ker(\rho|_{K_p})$ is mapped to 
\begin{equation}\label{eq:kerA*}
 \{\langle e, \,\cdot\, \rangle|_{A_p}  : e\in   \ker(\rho|_{E_p})\}=\h^{\circ}.
\end{equation}
The equality holds because the annihilator of the l.h.s of \eqref{eq:kerA*} is given by $\ker(\rho|_{E_p}))^{\perp}\cap A_p$, which agrees with $\h$.  
}

i) To see that $\h$ is a Lie ideal, we need to check that for any $h\in \h$ and $a\in A_p$ we have $[a,h]\in (\ker(\rho|_{E_p}))^{\perp}$. Take $e\in \ker(\rho|_{E_p})$. Extending $a,h$ (respectively $e$) to sections of $A$ (respectively $E$), we have
 $$\langle e, \oo a,h \cc \rangle=-\langle \oo a,e\cc, h \rangle
 +\rho(a)\langle e, h \rangle$$
by property   C4) in definition 
\ref{ca}. 
 This function vanishes at $p$, since $\ker(\rho|_{E_p})$ is closed under the Courant bracket and since $\rho(a)$ is a vector field vanishing at $p$.

 ii) The Chevalley-Eilenberg differential $d_{\g}$ 
 preserves $\wedge^\bullet \h^{\circ}$, since $\h$ is a Lie ideal (it suffices to check this for elements of $\h^{\circ}$).  Hence $d_{\g}$ descends to a differential on the quotient $\frac{\wedge^{\bullet} \g^*}{\wedge^{\bullet}\h^{\circ}}$.
Observe that  {the quotient map}
\begin{equation}\label{eq:resmap}
(\wedge^\bullet A^\ast_p, \overline{d_A})\to \left(\frac{\wedge^\bullet \g^*}{\wedge^\bullet \mathfrak h^\circ}, \overline{d_{\g}}\right)
\end{equation}
is a surjective chain map { with kernel given by $\wedge^\bullet \h^\circ$.
Since $\mathfrak h^\circ = \langle\ker(\rho|_{K_p}),\,\cdot\,\rangle|_{A_p}$ 
{(see equation \eqref{eq:kerA*})},  
the  map \eqref{eq:resmap} descends to an isomorphism between
$\left(\frac{\wedge^{\bullet} A^\ast_p}{\wedge^{\bullet}\langle\ker(\rho|_{K_p}),\,\cdot\,\rangle|_{A_p}},\overline{d_A}\right)$ -- as defined just before proposition \ref{thm:stableDirac} -- 
and
$\left(\frac{\wedge^{\bullet} \g^*}{\wedge^{\bullet}\h^{\circ}},\overline{d_{\g}}\right)$}.
\end{proof}

We finally can rephrase proposition  \ref{thm:stableDirac} in a more geometric way, and without making reference to lagrangian complements.

\begin{thm}[Stability of fixed points of Dirac structures]\label{thm:stableDirac2}
{Let $E\to M$ be a Courant algebroid whose pairing has split signature.
Let $A\subset E$ be a Dirac structure which has a fixed point at $p\in M$, i.e.  $\rho(A_p)=0$. Denote by $\g$ the Lie algebra $A_p$, and consider its Lie ideal $\mathfrak{h}:=(\ker(\rho|_{E_p}))^{\perp}$.}
Assume that
$$
H^2\left(\frac{\wedge^{\bullet} \g^*}{\wedge^{\bullet}\h^{\circ}},\overline{d_{\g}}\right) = 0.
$$
Fix a neighborhood $\widetilde{N}$ of $p$ inside the corresponding leaf of the Courant algebroid.

Then there exists an $C^1$-open neighborhood $\mathcal U$ of 
$A$ in the space of Dirac structures,
such that for any $L \in \mathcal{U}$ 
there is a  submanifold $F^L$ of $\widetilde{N}$ consisting of fixed points of $L$. The dimension of $F^L$ equals that of
$\ker\left(\overline{d_{\g}}\colon \frac{ \g^*}{ \h^{\circ}}\to \frac{\wedge^2 \g^*}{\wedge^2\h^{\circ}}\right).$
 \end{thm}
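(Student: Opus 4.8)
The plan is to deduce Theorem~\ref{thm:stableDirac2} by translating the already-established Proposition~\ref{thm:stableDirac} into intrinsic, complement-free language, using the three lemmas that immediately precede the theorem (Lemmas~\ref{cor:fixedpt}, \ref{lem:diffeo}, and \ref{lem:hypo}). The content of the theorem is genuinely contained in the proposition; what remains is to verify that the hypotheses match and that the conclusion can be phrased geometrically. So first I would fix an auxiliary lagrangian complement to $A$ — this exists because the pairing has split signature (see Remark~\ref{rem:split}) — and identify it with $A^*$ via the pairing, putting us exactly in the setup of \S\ref{subsec:defDirac} where Proposition~\ref{thm:stableDirac} applies.

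Next I would match the cohomological hypothesis. By Lemma~\ref{lem:hypo}(ii), the cochain complex $\left(\frac{\wedge^{\bullet} A^\ast_p}{\wedge^{\bullet}\ker((\rho_{A^\ast})_p)},\overline{d_A}\right)$ appearing in Proposition~\ref{thm:stableDirac} is isomorphic to $\left(\frac{\wedge^{\bullet} \g^*}{\wedge^{\bullet}\h^{\circ}},\overline{d_{\g}}\right)$, with $\h=(\ker(\rho|_{E_p}))^{\perp}$ being a Lie ideal by Lemma~\ref{lem:hypo}(i). Hence the assumption $H^2\left(\frac{\wedge^{\bullet} \g^*}{\wedge^{\bullet}\h^{\circ}},\overline{d_{\g}}\right)=0$ is exactly the vanishing of $H^2$ required to invoke the proposition, and the conclusion applies to every $\pp$ (equivalently every Dirac structure $L=\gr(\pp^{\sharp})$) in a $C^1$-neighborhood $\mathcal U$ of $A$. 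Any Dirac structure sufficiently close to $A$ is transverse to the chosen complement, so it is of this graph form, making the identification legitimate on $\mathcal U$.

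Then I would convert the conclusion of the proposition, which asserts the existence of a smooth family $I$ of parameters $x$ such that \eqref{eq:fixed} holds, into an assertion about actual fixed points. By Lemma~\ref{cor:fixedpt}, condition~\eqref{eq:fixed} is equivalent to $\phi_{-\rho(\sigma_{-1}(x))}(p)$ being a fixed point of $\pp$; thus the set $F^L:=\{\phi_{-\rho(\sigma_{-1}(x))}(p): x\in I\}$ consists of genuine fixed points of $L$, all lying in the leaf $N$ through $p$ (since the flow of $\rho(\sigma_{-1}(x))$ stays in the leaf). To see that $F^L$ is a submanifold of the prescribed dimension, I would use Lemma~\ref{lem:diffeo}: the map $\Phi\colon y\mapsto \phi_{-\rho(\sigma_{-1}(y))}(p)$ is a diffeomorphism onto its image near the origin, so it carries the smooth parameter family $I$ — which is parametrized by a neighborhood of $0$ in $\ker\left(\overline{d_{\g}}\colon \frac{\g^*}{\h^{\circ}}\to \frac{\wedge^2\g^*}{\wedge^2\h^{\circ}}\right)$ — diffeomorphically onto $F^L$. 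This pins down $\dim F^L$ as the dimension of that kernel and places $F^L$ inside any prescribed leaf-neighborhood $\widetilde N$ after possibly shrinking $\mathcal U$.

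The one point requiring genuine care, rather than routine bookkeeping, is the compatibility of the finite-dimensional parameter space $I\subset A^\ast_p/\ker((\rho_{A^\ast})_p)$ from the proposition with the intrinsic kernel space $\ker(\overline{d_{\g}})$ in the theorem: I must check that the isomorphism of Lemma~\ref{lem:hypo}(ii), restricted to degree~$1$, identifies $\ker\left(\overline{d_A}\colon \frac{A^\ast_p}{\ker((\rho_{A^\ast})_p)}\to \cdots\right)$ with $\ker\left(\overline{d_{\g}}\colon \frac{\g^*}{\h^{\circ}}\to\cdots\right)$ compatibly with the splitting $\sigma_{-1}$, so that the dimension count transports correctly and $\Phi$ is applied to the right domain. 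This is where the chain-map isomorphism \eqref{eq:resmap} and the identification \eqref{eq:kerA*} of $\ker(\rho|_{K_p})$ with $\h^\circ$ do the essential work; everything else is assembling the three lemmas in sequence.
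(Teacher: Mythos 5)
Your proposal is correct and follows essentially the same route as the paper's proof, step for step: choose a lagrangian complement via remark \ref{rem:split}, apply proposition \ref{thm:stableDirac}, identify the cohomological obstruction via lemma \ref{lem:hypo}, and transport the parameter family $I$ to $F^L:=\Phi(I)$ using lemmas \ref{cor:fixedpt} and \ref{lem:diffeo}. The only cosmetic difference is that the paper ensures $F^L\subset \widetilde{N}$ not by shrinking $\mathcal U$, but by choosing the splitting $\sigma_{-1}$ to take values in sections whose restrictions to the leaf are supported in $\widetilde{N}$, together with choosing the parameter neighborhood $U$ so that $\Phi|_U$ is a diffeomorphism onto its image.
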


\begin{proof}
Since $E$ has split signature, there exists a lagrangian complement to $A$, see remark \ref{rem:split}. This allows us to apply proposition \ref{thm:stableDirac}. We do so making the following choices  {in iii) and b)} at the beginning of \S \ref{sec:stabDirac}: the splitting 
$\sigma_{-1}\colon \frac{A^\ast_p}{\ker((\rho_{A^\ast})_p)} \to \Gamma(A^*)$   takes values in sections which, once restricted to the leaf, are supported in $\widetilde{N}$;
 the open neighborhood $U$ of the origin in the domain 
 {is such that $\Phi|_U$ is a diffeomorphism onto its image, where $\Phi$ is the map of lemma \ref{lem:diffeo}.}

The cohomological obstruction that appears in proposition \ref{thm:stableDirac} is identical to  the one of the present theorem, by lemma \ref{lem:hypo}.

The conclusions of proposition \ref{thm:stableDirac} imply those of the present theorem. To see this, notice that any Dirac structure $L$ close enough to $A$ is the graph of some element of $\Gamma(\wedge^2 A^*)$.
Consider the submanifold $I\subset U$ 
in that proposition. 
Using lemma \ref{cor:fixedpt} and the map $\Phi$ of lemma \ref{lem:diffeo}, it follows that 
$F^L:=\Phi(I)$ is a submanifold 
 {of $\widetilde{N}$} consisting  of fixed points of $L$. 
\end{proof}

\begin{remark}
{
We have a short exact sequence of cochain complexes
$$\{0\}\to \wedge^{\bullet}\h^{\circ}
\to 
\wedge^{\bullet} \g^* 
\to
\frac{\wedge^{\bullet} \g^*}{\wedge^{\bullet}\h^{\circ}}
\to \{0\}$$ with differentials induced by $d_{\g}$; notice that 
the first complex   agrees with the Chevalley-Eilenberg complex of the quotient Lie algebra $\g/\h$. A piece of the corresponding long exact sequence in cohomology reads $H^2(\g)\to H^2\left(\frac{\wedge^{\bullet} \g^*}{\wedge^{\bullet}\h^{\circ}},\overline{d_{\g}}\right) \to H^3(\g/\h)$. In particular, when the Lie algebra cohomology groups  $H^2(\g)$ and $ H^3(\g/\h)$ vanish, the obstruction in theorem \ref{thm:stableDirac2} also vanishes.
}
\end{remark}

\begin{remark}[Comparison with stability of Lie algebroids]
{Recall that every Dirac structure inherits a Lie algebroid structure $d_A$.
As one may expect, the cochain complex appearing in the obstruction in theorem \ref{thm:stableDirac2} does not only depend on the induced Lie algebroid structure of $A$: Indeed, $\g^*/\h^\circ$ has the same dimension as the leaf of the Courant algebroid through the fixed point $p$, by \eqref{eq:isoleaf}.}
{As any Dirac structure near $A$ induces a Lie algebroid structure on $A$ which is near $d_A$, there is a relation with the stability of a fixed point of the Lie algebroid $A$, as in \cite{CrFe}. This relation is reflected in the cohomological obstructions (see also the text below theorem 2 in the introduction and lemma 1.12 of \cite{CrFe}). Recall that the cohomological obstruction from \cite{CrFe} to the stability of a fixed point {of $A$ as a Lie algebroid} is given by $H^1(\g, T_pM)$. Here the action of $\g$ on $T_pM$ for $x\in \g, v \in T_pM$ is given by
$$
x\cdot v = [\rho_A(x),v].
$$
For $k\geq 1$, the map
\begin{align*}
\wedge^k\g^\ast &\to \wedge^{k-1} \g^\ast \otimes T_pM \\
\alpha_1 \wedge \dots \wedge \alpha_k& \mapsto \sum_{i=1}^k (-1)^{k-i}\alpha_1\wedge \dots \wedge \widehat{\alpha_i} \wedge \dots \wedge \alpha_k \otimes \rho_{A^\ast}(\alpha_i)
\end{align*}
descends to an injective chain map
\begin{equation}\label{eq:injchainmap}
\frac{\wedge^k \g^\ast}{\wedge^k \h^\circ }\to \wedge^{k-1} \g^\ast \otimes T_pM.
\end{equation}
Note that this map takes values in the subspace $\wedge^{k-1} \g^\ast \otimes T_pN$, where $N$ is the leaf of the Courant algebroid through $p$. The collection of these spaces forms a \emph{subcomplex} of $\wedge^{\bullet} \g^\ast \otimes T_pM$, but we will not use this.
For $k = 2$, the induced map \eqref{eq:injchainmap}} in cohomology relates the cohomological obstructions.

{In general, this map is neither injective, nor surjective, so vanishing of either cohomological obstruction does not imply vanishing of the other. This is to be expected, because while Dirac structures near $A$ are contained in the Lie algebroid structures near $d_A$, the equivalences for Dirac structures only allow to move $p$ along the leaf through $p$ of the Courant algebroid $E$.} 
However, if $\rho:E\to TM$ is surjective, then stability of a fixed point in the realm of Lie algebroids does imply stability of the fixed point of the Dirac structure. This is reflected at the level of obstructions: if $\rho$ is surjective, then the map in \eqref{eq:injchainmap} is injective in cohomology, hence the vanishing of $H^1(\g, T_pM)$ implies the vanishing of the obstruction in theorem \ref{thm:stableDirac2}.
\end{remark}

\begin{remark}
It would be interesting to investigate whether the statement of theorem \ref{thm:stableDirac2} remains true removing the split-signature condition.
\end{remark}

\section{Examples}\label{sec:ex}

In this section we present several examples for theorem \ref{thm:stableDirac2}, about the stability of fixed points of Dirac structures.
All our examples are of the kind we describe in this remark.

\begin{remark}\label{rem:BB*}
{Let $B$ be a Lie algebroid over $M$ and a pick a closed $H\in \Gamma(\wedge^3 B^*)$, yielding a Courant algebroid $(B \oplus B^*)_H$ as in example \ref{rem:double}. Let $\pi\in \Gamma(\wedge^2 B)$ such that 
\begin{equation}\label{eq:pitwist}
  [\pi,\pi]_B+2(\wedge^3\pi^{\sharp})(H)
=0.
\end{equation}
 Then $A:=
\text{graph}(\pi)$ is a Dirac structure, see example \ref{ex:DiracB}.
Let   $p\in M$ be  a fixed point of the Dirac structure $A$, i.e.  $A_p\subset \ker(\rho_B)_p\oplus B^*_p$, or equivalently $\rho_B\circ \pi^{\sharp}=0$.
}

{
One can compute the obstruction as in theorem \ref{thm:stableDirac2}, using $\h=\{0\}\oplus \ker(\rho_B)_p^{\circ}\subset A_p$. Often however we prefer to compute the obstruction using the characterization given in proposition \ref{thm:stableDirac}, since it yields the differential directly, without the need to make explicit the Lie algebra structure of $A_p$.  
A lagrangian complement to $A$ is $B\oplus \{0\}$, which by the pairing is identified with $A^*$.
Notice that the differential on $\Gamma(\wedge^{\bullet}A^*)\cong \Gamma(\wedge^{\bullet}B)$ 
is $d_B=[\pi,\,\cdot\,]_B+
 (\wedge^2\pi^{\sharp}\otimes \text{id})(H)(\,\cdot\,)$ by \cite[\S 3]{SW}. Hence the obstruction appearing in the theorem is 
\begin{equation}\label{eq:obsttriang}
H^2\left(\frac{\wedge^{\bullet} B_p}{\wedge^{\bullet}\ker((\rho_{B})_p)},\overline{[\pi,\,\cdot\,]_B+
 (\wedge^2\pi^{\sharp}\otimes \text{id})(H)(\,\cdot\,)}
\right),
\end{equation}
where the differential is computed extending to sections of $\wedge^{\bullet}B$ and then evaluating at $p$.
}
\end{remark}
 
 \begin{cor}\label{HtwistedCA}
{Fix a closed 3-form $H \in \Omega^3(M)$. 
Let $\pi \in \Gamma(\wedge^2 TM)$ be an $H$-twisted Poisson structure.
Let $p\in M$ be a point such that  $\pi_p=0$.}

Recall that  $\g:=T^*_pM$ carries a Lie algebra structure, defined by   $[d_pf,d_pg]=d_p\{f,g\}$ where $f,g$ are functions. If its second Chevalley-Eilenberg cohomology vanishes, i.e. $H^2(\g)=0$, then any $H$-twisted Poisson structure nearby $\pi$  
{vanishes along a submanifold of dimension $\dim(H^1(\g))$.}
\end{cor}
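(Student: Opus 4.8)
The plan is to read corollary~\ref{HtwistedCA} as the specialization of theorem~\ref{thm:stableDirac2} to the Dirac structure $A:=\gr(\pi^{\sharp})\subset (TM\oplus T^*M)_H$ associated to the $H$-twisted Poisson structure $\pi$, as in example~\ref{ex:Dirac}b) (equivalently remark~\ref{rem:BB*} with $B=TM$). First I would record the two structural facts that make the theorem applicable. The pairing on $(TM\oplus T^*M)_H$ has split signature $(n,n)$, so a lagrangian complement exists. Moreover, since the anchor is the first projection $TM\oplus T^*M\to TM$, an element $q\in M$ is a fixed point of $\gr(\pi'^{\sharp})$ exactly when the image of $\pi'^{\sharp}_q$ is zero, i.e. $\pi'_q=0$; in particular $p$ is a fixed point of $A$ precisely because $\pi_p=0$, which is the hypothesis.

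The technical core is the identification of the algebraic data $\g$ and $\h$ at $p$. Because $\pi_p=0$, we have $A_p=\{0\}\oplus T_p^*M$, so the isotropy Lie algebra $\g=A_p$ is identified with $T_p^*M$, and I would check that the restriction of the Courant bracket recovers the cotangent bracket $[d_pf,d_pg]=d_p\{f,g\}$. The key observation here is that the twist enters the bracket only through the term $\iota_{\pi^{\sharp}\xi_2}\iota_{\pi^{\sharp}\xi_1}H$, which is quadratic in the anchored vector fields $\pi^{\sharp}\xi_i$; these vanish at $p$, so $H$ does not contribute to the isotropy bracket, leaving exactly the standard cotangent Lie algebra structure. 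Next I would compute $\h=(\ker(\rho|_{E_p}))^{\perp}$: here $\ker(\rho|_{E_p})=\{0\}\oplus T_p^*M$ is lagrangian, hence equal to its own orthogonal, giving $\h=A_p=\g$. Consequently $\h^{\circ}=\{0\}$ inside $\g^*$, and the quotient complex collapses, namely $\frac{\wedge^{\bullet}\g^*}{\wedge^{\bullet}\h^{\circ}}$ coincides with the Chevalley--Eilenberg complex of $\g$ in degrees $\geq 1$ and vanishes in degree $0$. Since the Chevalley--Eilenberg differential vanishes on $\wedge^0\g^*$, this yields $H^k\!\left(\frac{\wedge^{\bullet}\g^*}{\wedge^{\bullet}\h^{\circ}},\overline{d_{\g}}\right)\cong H^k(\g)$ for all $k\geq 1$. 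In particular the obstruction of theorem~\ref{thm:stableDirac2} equals $H^2(\g)$, and $\dim\ker\!\left(\overline{d_{\g}}\colon \g^*/\h^{\circ}\to \wedge^2\g^*/\wedge^2\h^{\circ}\right)=\dim\ker\!\left(d_{\g}\colon \g^*\to \wedge^2\g^*\right)=\dim H^1(\g)$.

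With these identifications in hand, I would invoke theorem~\ref{thm:stableDirac2}. The leaf of $(TM\oplus T^*M)_H$ through $p$ is a neighborhood of $p$ in $M$, since $\rho$ is surjective, so the fixed-point submanifold produced by the theorem lives near $p$ in $M$. Under the hypothesis $H^2(\g)=0$, the theorem provides a $C^1$-neighborhood of $A$ in the space of Dirac structures in which every $L$ admits a submanifold of fixed points of dimension $\dim H^1(\g)$. Finally I would translate back: any $H$-twisted Poisson structure $\pi'$ that is $C^1$-close to $\pi$ has graph $L=\gr(\pi'^{\sharp})$ which is $C^1$-close to $A$, hence lies in that neighborhood; and by the first paragraph the fixed points of $L$ are exactly the zeros of $\pi'$. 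Thus $\pi'$ vanishes along a submanifold of dimension $\dim H^1(\g)$, as claimed.

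The step I expect to require the most care is the middle paragraph: verifying that the twist $H$ genuinely drops out of the isotropy Lie algebra bracket at $p$, and that $\ker(\rho|_{E_p})$ is lagrangian so that $\h=\g$ and the quotient complex degenerates to the (reduced) Chevalley--Eilenberg complex. Everything else is a matter of transporting theorem~\ref{thm:stableDirac2} through the dictionary between $H$-twisted Poisson structures and their graphs.
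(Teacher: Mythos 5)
Your proposal is correct, and it reaches the same endpoint as the paper's proof by an equivalent but noticeably ``dual'' computation of the obstruction. The paper deliberately avoids making the isotropy data explicit: as announced in remark \ref{rem:BB*}, it chooses the lagrangian complement $B=TM\cong A^*$ and reads off the obstruction complex from \eqref{eq:obsttriang} --- since $\rho|_{TM}$ is injective the kernel is zero and the complex is just $\wedge^{\bullet}T_pM$, and since $\pi_p=0$ the summand $(\wedge^2\pi^{\sharp}\otimes \mathrm{id})(H)$ of the differential vanishes, leaving $\overline{[\pi,\,\cdot\,]}$, which is then recognized as the Chevalley--Eilenberg complex of the cotangent Lie algebra. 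You instead go through theorem \ref{thm:stableDirac2} intrinsically: you identify $\g=A_p=\{0\}\oplus T_p^*M$, verify by hand that the $H$-term $\iota_{\pi^{\sharp}\xi_2}\iota_{\pi^{\sharp}\xi_1}H$ drops out of the isotropy bracket at $p$ (the paper's vanishing of the $H$-summand of the differential is exactly this fact, seen one level up), and observe that $\ker(\rho|_{E_p})=\{0\}\oplus T_p^*M$ is lagrangian, so $\h=\g$, $\h^{\circ}=\{0\}$, and the quotient complex is the reduced Chevalley--Eilenberg complex with $H^k\cong H^k(\g)$ for $k\geq 1$ --- the same identification that lemma \ref{lem:hypo} guarantees abstractly. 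Your route costs a little more verification (the bracket computation and the lagrangian property of $\ker(\rho|_{E_p})$), but it buys two things the paper leaves implicit: the explicit degree bookkeeping showing the obstruction is literally $H^2(\g)$ with no degree-$0$ correction, and the dimension count $\dim\ker\bigl(\overline{d_{\g}}\colon \g^*/\h^{\circ}\to \wedge^2\g^*/\wedge^2\h^{\circ}\bigr)=\dim\ker\bigl(d_{\g}\colon \g^*\to\wedge^2\g^*\bigr)=\dim H^1(\g)$, which justifies the dimension asserted in the corollary. Your translation steps (split signature, fixed points of $\gr(\pi'^{\sharp})$ equal zeros of $\pi'$, the leaf through $p$ being open in $M$ by surjectivity of $\rho$) are all correct and match the paper's implicit use of them.
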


For $H = 0$, this recovers \cite[theorem 1.1]{CrFe} for zero-dimensional leaves, and \cite[theorem 1.2]{dufour2005stability} for first order singularities.
Note that the obstruction does not depend on $H$.

\begin{proof}
{By  example \ref{ex:Dirac} we know that 
$A := \text{graph}(\pi)\subset E_H:=(TM \oplus T^\ast M)_H$ is a Dirac structure. Further, $\pi_p=0$ means that  $p\in M$ is a fixed point of the Dirac structure $A$.}
 
{As lagrangian complement to $A$ we choose $B=TM\cong A^*$.
The complex appearing in \eqref{eq:obsttriang} 
is just $\wedge^{\bullet}T_pM$, since the anchor $\left. \rho\right|_{TM} $ is injective. The differential reads $\overline{[\pi,\,\cdot\,]}$ (notice that the second summand in  \eqref{eq:obsttriang} vanishes, since $\pi_p=0$).}

{This is exactly the complex computing the Chevalley-Eilenberg cohomology of the isotropy Lie algebra of $A$ at the point $p$, and this Lie algebra is the one described in the statement of this corollary.}
\end{proof}
\begin{remark}\label{rem:locallyuntwist}
Since the stability problem is local and $H$ locally admits a primitive 2-form $\omega$ around $p\in M$,  by remark \ref{rem:untwist} the Dirac structure $\gr(\pi)$ is isomorphic to a Dirac structure in the standard Courant algebroid. Because $\pi_p = 0$, this Dirac structure is the graph of a Poisson structure near $\pi$ on a neighborhood of $p$ (see \cite[\S 4]{SW}). Corollary \ref{HtwistedCA} is therefore equivalent to \cite[theorem 1.1]{CrFe} for zero-dimensional leaves and \cite[theorem 1.2]{dufour2005stability} for first order singularities.
\end{remark}

\begin{ep}[Cartan-Dirac structure]\label{CartanDirac}
{Let $G$ be a Lie group with a bi-invariant, possibly indefinite metric  $(\,\cdot\,,\,\cdot\,)$ (for instance, a compact Lie group).
The Cartan-Dirac structure on $G$ was introduced in \cite[example 5.2]{SW}, and
 is a Dirac structure in the twisted Courant algebroid $(TG\oplus T^*G)_{-H}$. Here $H$ is the Cartan 3-form, i.e. the bi-invariant 3-form on $G$  which at the unit element reads $H(u,v,w):=\frac{1}{2}([u,v],w )$ for $u,v,w\in \g=T_eG$. 
Explicitly, it is given by 
$A:=\{(v^L-v^R, \frac{1}{2}(v^L+v^R)^{\flat}):v\in \g\}$, where $v^L$ and $v^R$ denote the left-invariant and right-invariant extension, and $\flat$ denotes contraction with the metric.}

With the induced Lie algebroid structure, $A$ is isomorphic (over $Id_G$) to the transformation Lie algebroid associated to the action of $G$ on itself by conjugation. In particular the leaves of the   Cartan-Dirac structure $A$ are the conjugacy classes of $G$. So the unit $e\in G$ is a fixed point of $A$, and the isotropy Lie algebra of $A$ at $e$ is just $\g$.
By corollary \ref{HtwistedCA} we hence know that if $H^2(\g)=0$ then, {for any neighborhood $U\subset G$ of $e$, there exists a neighborhood of $A$ in the space of Dirac structures consisting of $H$-twisted Poisson structures with a fixed point near $p$.}
\end{ep}

In the preceding corollary and example, the twisting 3-forms $H$ could be chosen such that their cohomology classes are nonzero. However, as explained in remark \ref{rem:locallyuntwist}, the only thing that matters is the cohomology class when restricting to a neighborhood of a point. {When the restricted twisting is exact, locally there exist a Dirac complement, by example \ref{ex:DiracB}.}
Below we give an instance {where the twisting is not even locally exact, and  there is no locally defined Dirac complement. In such a case,   \cite[theorem 5.50]{KarandeepStability} does not apply, and one really needs the more general statement we provided in proposition  \ref{thm:stableDirac}.
}

\begin{ep}
Let $M = \mathbb R^4$ with coordinates $(x_1,x_2,x_3,x_4)$, and let $Z$ be the self-crossing hypersurface given by the equation $x_1x_2x_3 = 0$. Let $B_Z$ be the associated $c$-tangent bundle \cite{ctangent}.
This is the Lie algebroid whose sections consist of vector fields on $\mathbb R^4$ which are tangent to $Z$; 
an adapted frame is provided by  $\{e_1,e_2,e_3,e_4\}$, where $\rho_{B_Z}(e_i) = x_i\partial_{x_i}$ for $i = 1,2,3$ and $\rho_{B_Z}(e_4) = \partial_{x_4}$. Let $\{e^1,e^2,e^3,e^4\}$ denote the dual frame of $B^\ast_Z$. 
Then $\pi \in \Gamma(\wedge^2 B_Z)$ given by 
$$
\pi = x_4 e_1 \wedge e_4
$$
satisfies 
\begin{equation}
\label{eq:Htwistpoiss}
[\pi,\pi]_{B_Z} = 2 \wedge^3 \pi^{\sharp}(H),
\end{equation}
{for any\footnote{For instance
$H = e^1\wedge e^2 \wedge e^3$.} choice of $d_{B_Z}$-closed $H \in \Gamma(\wedge^3 B_Z^\ast)$.} 
Note that the right hand side vanishes, as $\pi^{\sharp}:B_Z^\ast \to B_Z$ has rank at most $2$. 
Equation \eqref{eq:Htwistpoiss} 
implies that $A:=\text{graph}(\pi^{\sharp})$
is a Dirac structure in the Courant algebroid  $(B_Z \oplus B^\ast_Z)_H$, and 
 $p = 0\in \mathbb R^4$ is a fixed point since $\pi$ vanishes there. 

To compute the cohomological obstruction, we denote  $$\mathfrak h :=\ker((\rho_{B_Z})_p) = \text{span}_{\mathbb R}\{e_1(p),e_2(p),e_3(p)\}.$$ Then 
the complex appearing in  \eqref{eq:obsttriang}, in the relevant degrees, 
can be identified with
\[
\begin{tikzcd}
\mathbb{R} e_4(p) \arrow{r}{[\pi,\,\cdot\,]_{B_Z}}& \mathfrak h \wedge\mathbb{R} e_4(p) \arrow{r}{[\pi,\,\cdot\,]_{B_Z}} & \wedge^2\mathfrak h \wedge \mathbb{R} e_4(p),
\end{tikzcd}
\]
{by using the decomposition
$(B_Z)_p=\h\oplus \mathbb{R} e_4(p)$.
}
Here, the differential should be interpreted as extending an element $v \in \wedge^i \mathfrak h \wedge e_4(p)$ to a local section $\widetilde{v} \in \Gamma(\wedge^{i+1} B_Z)$, computing $[\pi,\widetilde{v}]_{B_Z}(p)$ and projecting to the subspace given by $\wedge^{i+1} \mathfrak h\wedge \mathbb R e_4(p)$. {The  cohomology of the above complex at $\mathfrak h\wedge\mathbb{R} e_4(p)$ vanishes, as one sees using the fact that the frame $\{e^1,e^2,e^3,e^4\}$  of $B^\ast_Z$ consists of pairwise commuting sections. Thus 
theorem \ref{thm:stableDirac2} 
implies that any Dirac structure in $(B_Z \oplus B_Z^\ast)_H$ close to $\text{graph}(\pi^{\sharp})$ has a fixed point near $0$.
}
\end{ep}

\begin{remark}\label{rem:stablesection}
In view of remark \ref{rem:stabplus}, notice that $A_p = (B^\ast_Z)_p$. However, the vanishing of $\pi$ as a section of $\Gamma(\wedge^2 B_Z)$ is not stable, as the graph of the $c$-bivector field  $\pi_t = \pi + t e_1 \wedge e_2$ is Dirac, but does not coincide with $B_Z^\ast$ at any point {for $t\neq 0$}.
\end{remark}

{
\begin{remark}[On the induced Poisson bivector field] 
Let $B$ be a Lie algebroid over $M$ with anchor $\rho$, let $h$ be a closed 3-form on $M$, 
and let  $\pi\in \Gamma(\wedge^2 B)$ satisfying \eqref{eq:pitwist} for $H:=\rho^*h$. Then $\pi_M:=(\wedge^2\rho)\pi$ is an $h$-twisted Poisson bivector field on $M$.
 If $p$ is a fixed point of $\gr(\pi)$, then $p$ is a fixed point   of $\pi_M$, since $\pi_M^{\sharp}=\rho\circ\pi^{\sharp}\circ\rho^*$.
Therefore, assuming for simplicity that the anchor $\rho$ is an isomorphism on an open dense set of $M$, theorem \ref{thm:stableDirac2} implies the following:  
if \eqref{eq:obsttriang} vanishes, then any $h$-twisted Poisson bivector field nearby $\pi_M$ which can be lifted to $B$, has a fixed point nearby $p$. For instance, when $B$ is
the $c$-tangent bundle associated to a self-crossing hypersurface $Z$, this is a statement about $h$-twisted Poisson bivector field nearby $\pi_M$ which are tangent to $Z$. See \cite[\S 5.1.5]{KarandeepStability} for an example in the case $h=0$.
\end{remark}
}

\appendix
\section{}\label{appA}
    In this appendix we prove two lemmas needed in the body of the paper.

Assume the setting and notation introduced at the beginning of \S \ref{sec:stabDirac}. In  item ii) there, we stated that  a certain   subspace $W$ is  a closed $L_\infty[1]$-subalgebra of the $L_\infty[1]$-algebra $V$ introduced there. We now prove this fact.

  \begin{lemma}\label{lem:subalg}
    Let $W^i \subset V^i$ be defined by 
    $$
    W^i:= \{\Lambda \in \Gamma(\wedge^{i+2} A^\ast) \mid \Lambda_p \in \wedge^{i+2} \ker((\rho_{A^\ast})_p)\}.
    $$
    Then
    \begin{itemize}
        \item [1)] $W^i\subset V^i$ is a closed subspace for $i = -1,0,1$,
        \item[2)] $W = \bigoplus_{i=-2}^{\infty}W^i$ is a $L_\infty[1]$-subalgebra of $(\Gamma(\wedge^{\bullet+2} A^\ast),\{\mu_k\}_{1\leq k\leq 3})$.
    \end{itemize}
    \end{lemma}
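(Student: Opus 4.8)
The plan is to prove the two assertions separately, and for the subalgebra claim to check the three nontrivial brackets $\mu_1,\mu_2,\mu_3$ one at a time. Throughout I abbreviate $\g:=A_p$ (the isotropy Lie algebra at the fixed point) and $\mathfrak{k}:=\ker((\rho_{A^\ast})_p)\subseteq A^\ast_p$, so that $W^i=\{\Lambda\in\Gamma(\wedge^{i+2}A^\ast):\Lambda_p\in\wedge^{i+2}\mathfrak{k}\}$. I will use two elementary linear-algebra facts, with $\h:=\mathfrak{k}^{\circ}\subseteq\g$: a form in $\wedge^{m}A^\ast_p$ lies in $\wedge^{m}\mathfrak{k}$ if and only if $\iota_h$ annihilates it for every $h\in\h$; and $\iota_x(\wedge^{\bullet}\mathfrak{k})\subseteq\wedge^{\bullet}\mathfrak{k}$ for every $x\in\g$. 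Finally, the fixed-point hypothesis gives $\rho_A|_{A_p}=0$, while by definition $\rho_{A^\ast}$ vanishes on $\mathfrak{k}$ at $p$.

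Part 1) is quick: the evaluation map $\mathrm{ev}_p\colon\Gamma(\wedge^{i+2}A^\ast)\to\wedge^{i+2}A^\ast_p$ is continuous already for the $C^0$-topology, hence a fortiori for the finer $C^1$- and $C^\infty$-topologies on $V^0$ and $V^{-1}$. As $\wedge^{i+2}\mathfrak{k}$ is a linear subspace of the finite-dimensional space $\wedge^{i+2}A^\ast_p$, it is closed, so $W^i=\mathrm{ev}_p^{-1}(\wedge^{i+2}\mathfrak{k})$ is closed.

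For part 2) I would dispatch $\mu_3$ first, since it is tensorial: its value at $p$ depends only on $\alpha_p,\beta_p,\gamma_p$ and $\Psi_p\in\wedge^3\g$, and because each sharp sends $\g$ into $\wedge^{\bullet}\mathfrak{k}$ (second fact above), the output lies in $\wedge^{\bullet}\mathfrak{k}$. For $\mu_1=d_A$ I would use that $\rho_A$ vanishes on all of $A_p$, so that every anchor term in the Koszul formula for $(d_A\Lambda)(a_0,\dots,a_k)$ vanishes at $p$; hence $(d_A\Lambda)_p=d_{\g}(\Lambda_p)$, the Chevalley--Eilenberg differential of $\g$. By \eqref{eq:kerA*} we have $\mathfrak{k}=\h^{\circ}$ with $\h$ the Lie ideal of lemma \ref{lem:hypo}, so $d_{\g}$ preserves $\wedge^{\bullet}\h^{\circ}=\wedge^{\bullet}\mathfrak{k}$, which is precisely $\mu_1(W^i)\subseteq W^{i+1}$.

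The main obstacle will be $\mu_2$, the Schouten bracket $[\cdot,\cdot]_{A^\ast}$: it is a first-order bidifferential operator, so $[\alpha,\beta]_{A^\ast}|_p$ depends a priori on the $1$-jets of $\alpha,\beta$ at $p$, not merely on $\alpha_p,\beta_p$. To control this I would fix a local frame $\theta^1,\dots,\theta^N$ of $A^\ast$ with $\theta^1(p),\dots,\theta^r(p)$ a basis of $\mathfrak{k}$, so that $\rho_{A^\ast}(\theta^s)|_p=0$ for $s\le r$. Two inputs then conclude. First, $\mathfrak{k}$ is closed under $[\cdot,\cdot]_{A^\ast}$ at $p$: the subspace $\ker(\rho|_{E_p})=A_p\oplus\mathfrak{k}$ is closed under the Courant bracket, since by property C2) of definition \ref{ca} the anchor of a bracket is the Lie bracket of the anchors, and the Lie bracket of vector fields vanishing at $p$ vanishes at $p$; projecting via $pr_{A^\ast}$ then lands in $\mathfrak{k}$. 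Second, expanding $\alpha=\sum_I\alpha_I\theta^I$, $\beta=\sum_J\beta_J\theta^J$ and using the graded Leibniz rule, any term of $[\alpha,\beta]_{A^\ast}|_p$ in which an anchor $\rho_{A^\ast}(\theta^m)$, with $\theta^m$ a factor of $\theta^I$ (resp.\ of $\theta^J$), differentiates a coefficient is multiplied by $\alpha_I$ (resp.\ $\beta_J$) evaluated at $p$; for it to survive this coefficient must be nonzero at $p$, which forces $I$ (resp.\ $J$) $\subseteq\{1,\dots,r\}$ and hence $m\le r$, so $\rho_{A^\ast}(\theta^m)|_p=0$ kills the term. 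Thus only the tensorial part $\sum\alpha_I(p)\beta_J(p)\,[\theta^I,\theta^J]_{A^\ast}|_p$ with $I,J\subseteq\{1,\dots,r\}$ remains, and by the Koszul formula each $[\theta^I,\theta^J]_{A^\ast}|_p$ is built from brackets $[\theta^s,\theta^t]_{A^\ast}|_p\in\mathfrak{k}$ (first input, $s,t\le r$) wedged with frame covectors $\theta^{\bullet}(p)\in\mathfrak{k}$, so it lies in $\wedge^{\bullet}\mathfrak{k}$. This yields $\mu_2(W^i,W^j)\subseteq W^{i+j+1}$ and finishes the argument.
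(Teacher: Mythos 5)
Your proof is correct. Part 1) coincides with the paper's argument (continuity of $\mathrm{ev}_p$ plus closedness of the finite-dimensional subspace), but part 2) takes a genuinely different route. The paper does not check the brackets degree by degree: it observes that $W$ is generated under the wedge product by $W^{-2}=C^\infty(M)$ and $W^{-1}$, and that $\mu_1,\mu_2,\mu_3$ are graded derivations in each entry (citing \cite[Remark B.2]{defSF}), which reduces everything to three low-degree checks, namely $\mu_1(f)\in W^{-1}$, $\mu_1(X)\in W^0$ and $\mu_2(X,Y)\in W^{-1}$ for $X,Y\in W^{-1}$; in particular $\mu_3$ requires no check at all, since by degree reasons $\mu_3(X,Y,Z)\in V^{-2}=W^{-2}$. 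Your frame computation for $\mu_2$ is essentially the coordinate incarnation of that derivation property: the observation that every first-order term at $p$ is multiplied by a coefficient $\alpha_I(p)$ or $\beta_J(p)$, forcing $I,J\subseteq\{1,\dots,r\}$ and hence a vanishing anchor $\rho_{A^\ast}(\theta^m)|_p$, is what the paper gets for free from the multiderivation structure. The geometric core is the same in both proofs: your ``first input'' — that $\ker(\rho|_{E_p})=A_p\oplus\mathfrak{k}$ is closed under the Courant bracket at $p$ by property C2) of definition \ref{ca}, so that projecting to $A^\ast$ lands in $\mathfrak{k}$ — is exactly the paper's computation $\rho_{A^\ast}(\mu_2(X,Y))|_p=[\rho(X),\rho(Y)]|_p-\rho_A(\Psi(X,Y,\cdot))|_p=0$; and your $\mu_1$ step (reduction to $(d_A\Lambda)_p=d_{\g}(\Lambda_p)$ via $\rho_A|_{A_p}=0$, then the Lie-ideal property of $\h$ with $\mathfrak{k}=\h^\circ$) is precisely what the paper imports from the proof of lemma \ref{lem:hypo} ii). What your version buys is self-containedness: you never need the cited multiderivation remark, and your tensoriality argument for $\mu_3$ (contraction-stability of $\wedge^\bullet\mathfrak{k}$ under $\iota_x$, $x\in A_p$) is a clean direct check valid in all degrees. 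What the paper's generation-plus-derivation argument buys is brevity: three short verifications on generators, with $\mu_3$ and all higher-degree cases of $\mu_1,\mu_2$ handled automatically, at the cost of invoking an external structural fact.
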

    \begin{proof}
    \begin{itemize}\item[]
    \item[1)] Recall that 
    the evaluation map $\text{ev}_p:\Gamma(\wedge^{i+2} A^\ast) \to \wedge^{i+2} A^\ast_p$ is continuous when the left hand side is equipped with the $C^k$-topology for some $k\geq 0$. As $\wedge^{i+2} \ker((\rho_{A^\ast})_p)\subset \wedge^{i+2} A^\ast_p$ is closed, it follows that $W^i = \text{ev}_p^{-1}(\wedge^{i+2} \ker((\rho_{A^\ast})_p))$ is closed.
    \item[2)]
{Notice that $W$ is invariant under wedge product and the multibrackets $\mu_1, \mu_2,\mu_3$ are graded derivations in each entry \cite[Remark B.2]{defSF}. Because of this
it is sufficient to show that  $W^{-2}$ and $W^{-1}$ -- the degree components that generate $W$ --
are closed under the multibrackets.
Since  $W^{-2}=V^{-2}$ and by degree reasons, 
we are actually reduced to showing that
for $f\in W^{-2}=C^{\infty}(M)$ and
 $X,Y\in W^{-1}= 
 \text{ev}_p^{-1}( \ker((\rho_{A^\ast})_p))$:
$$
    \mu_1(f) \in W^{-1},\;\;\;\; \mu_1(X) \in W^0,\;\;\;\; \mu_2(X,Y) \in W^{-1}.
    $$}

 {   
 We already showed the first two statements in the proof of 
lemma \ref{lem:hypo} ii).}

    We   show that $\mu_2(X,Y) \in W^{-1}$. As 
    $$
    \mu_2(X,Y) = \oo X,Y \cc - \Psi(X,Y,\,\cdot\,),
    $$
    we note that
    \begin{align*}
    \rho_{A^\ast}(\mu_2(X,Y)) &= \rho(\oo X,Y\cc) - \rho_A(\Psi(X,Y,\,\cdot\,))\\
    &= [\rho(X),\rho(Y)] - \rho_A(\Psi(X,Y,\,\cdot\,)).
    \end{align*}
    Evaluating the right hand side in $p$, the first term vanishes because it is the Lie bracket of vector fields $\rho(X)$ and $\rho(Y)$ which vanish in $p$, while the second term vanishes because $\rho_A$ vanishes at $p$. This shows that $\mu_2(X,Y) \in W^{-1}$. 
    \end{itemize}
    \end{proof}
 
The following statement is needed in the proof of lemma \ref{lem:diffeo}. We include a proof for completeness.

\begin{lemma}\label{lem:spray}
{Let $N$ be a manifold, $p$ a point, and consider a linear map $X\colon T_pN\to \mathfrak{X}_c(N)$ to the compactly supported vector fields, mapping each vector $v\in T_pN$ to a vector field $X^v$ extending it (i.e. $X^v(p)=v$). Denote by $\phi^1_{X^v}$ the time-$1$ flow of $X^v$. Then the map
$$\Psi\colon T_pN\to N,\; v\to \phi^1_{X^v}(p),$$
when restricted to a suitable neighborhood of the origin, is a diffeomorphism onto its image.}
\end{lemma}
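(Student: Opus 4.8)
The plan is to show that $\Psi$ is smooth with invertible differential at the origin, and then invoke the inverse function theorem. The key observation is that $\Psi$ is precisely the time-$1$ flow of a vector field depending on an initial velocity, so the natural strategy is to recognize $\Psi$ as built from the flow of a \emph{spray}-type construction, or more elementarily, to compute $(D\Psi)_0$ directly.

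First I would set up the smoothness of $\Psi$. Since the assignment $v\mapsto X^v$ is linear into $\mathfrak{X}_c(N)$, the combined map $(v,t,q)\mapsto \phi^t_{X^v}(q)$ is jointly smooth in all arguments by smooth dependence of solutions of ODEs on parameters and initial conditions (here $v$ enters as a parameter through the compactly supported, hence complete, vector field $X^v$). Restricting to $t=1$ and $q=p$ gives that $\Psi$ is smooth. Completeness of $X^v$, guaranteed by compact support, ensures the time-$1$ flow is everywhere defined, so there is no domain issue.

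Next I would compute the differential of $\Psi$ at $0\in T_pN$. The heart of the matter is that for the \emph{zero} vector $v=0$, one may choose $X^0$ to be understood via $\Psi(0)=\phi^1_{X^0}(p)=p$, and more importantly, along the ray $t\mapsto \Psi(tv)=\phi^1_{X^{tv}}(p)$ one differentiates at $t=0$. By rescaling the flow, $\phi^1_{X^{tv}}(p)$ agrees to first order in $t$ with $\phi^t_{X^{v}}(p)$ when $X^{tv}=tX^v$ depends linearly; using linearity of $v\mapsto X^v$ one gets $X^{tv}=tX^v$, and a standard reparametrization argument gives $\left.\frac{d}{dt}\right|_{t=0}\phi^1_{tX^v}(p)=X^v(p)=v$. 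Hence $(D\Psi)_0=\mathrm{id}_{T_pN}$, which is manifestly invertible.

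The main obstacle I anticipate is making the differential computation rigorous, since $\Psi(tv)$ involves the time-$1$ flow of the rescaled field $tX^v$ rather than a single flow evaluated at varying time; the clean way is the reparametrization identity $\phi^1_{tX^v}=\phi^t_{X^v}$, valid because rescaling a vector field by a constant $t$ rescales time by $t$. With this, $\left.\frac{d}{dt}\right|_{t=0}\phi^t_{X^v}(p)=X^v(p)=v$ follows directly from the definition of the flow. Once $(D\Psi)_0=\mathrm{id}$ is established, the inverse function theorem gives a neighborhood of $0\in T_pN$ on which $\Psi$ is a diffeomorphism onto its image, completing the proof.
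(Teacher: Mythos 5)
Your proposal is correct and follows essentially the same route as the paper: establish smoothness via smooth dependence of the flow on the parameter $v$ (the paper packages this by flowing the auxiliary vector field $Y(q,v):=X^v(q)$ on $N\times T_pN$, which is the standard implementation of your parameter-dependence argument), then compute $(D\Psi)_0=\mathrm{id}$ using exactly the same identity $\phi^1_{X^{tv}}=\phi^1_{tX^v}=\phi^t_{X^v}$, and conclude by the inverse function theorem. No gaps to report.
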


\begin{proof}
We can express $\Psi$ in terms of the vector field $Y$ on $N\times T_pN$ defined by $Y(q,v):=X^v(q)$, as follows:  $\Psi(v)=pr_N (\phi^1_Y(p,v))$. This description implies that
the map $\Psi$ is smooth.
The derivative of $\Psi$ at the origin is $Id_{T_pM}$, as one computes
$$(d_0\Psi)(v)=\left.\frac{d}{dt}\right|_{t=0}\Psi(tv)=X^v(p)=v$$
using $\phi^1_{X^{tv}}=\phi^1_{tX^v}=\phi^t_{X^v}$.
Hence the statement follows from the inverse function theorem.
\end{proof}

\bibliographystyle{habbrv} 

\end{document}